\newtheorem{thm}{Theorem}[section]
\newtheorem{cor}[thm]{Corollary}
\newtheorem{lem}[thm]{Lemma}
\theoremstyle{definition}
\newtheorem{defn}[thm]{Definition}
\newtheorem{exm}[thm]{Example}
\newtheorem{rem}[thm]{Remark}
\numberwithin{equation}{section}
\DeclareMathOperator{\depth}{depth}
\DeclareMathOperator{\ass}{Ass}
\DeclareMathOperator{\supp}{supp}
\DeclareMathOperator{\dstab}{dstab}
\DeclareMathOperator{\astab}{astab}
\DeclareMathOperator{\conv}{conv}
\DeclareMathOperator{\height}{ht}
\DeclareMathOperator{\link}{lk}
\def\Nset{\mathbb {N}}
\def\Zset{\mathbb {Z}}
\def\Rset{\mathbb {R}}
\def\Ga {CS_{\albf}}
\def\Da {\Delta_{\albf}}
\def\albf {{\boldsymbol{\alpha}}}
\def\bebf {{\boldsymbol{\beta}}}
\def\gabf {{\boldsymbol{\gamma}}}
\def\zbf {\mathbf 0}
\def\ebf {\mathbf e}
\def\xbf {\mathbf x}
\def\ybf {\mathbf y}
\def\abf {\mathbf a}
\def\mfr {\mathfrak m}
\def\nfr {\mathfrak n}
\def\afr {\mathfrak a}
\def\Htil {\widetilde{H}}
\begin{document}

\title[Stability of Depths and Cohen-Macaulayness of Integral Closures] {Stability of Depth and Cohen-Macaulayness of Integral Closures of  Powers of Monomial Ideals}
\author{Le Tuan Hoa and Tran Nam Trung}
\address{Institute of Mathematics, VAST, 18 Hoang Quoc Viet, 10307 Hanoi, Viet Nam}
\email{lthoa@math.ac.vn, tntrung@math.ac.vn}
\subjclass{13D45, 05C90}
\keywords{Depth, monomial ideal, simplicial complex,  integral closure.}
\date{}
\dedicatory{}
\commby{}
%-----------------------------------------------------------
\begin{abstract} Let $I$ be  a monomial ideal $I$ in a polynomial ring $R = k[x_1,...,x_r]$. In this paper we give an upper bound on  $\overline{\dstab} (I)$  in terms of $r$ and the maximal generating degree $d(I)$ of $I$ such that  $\depth R/\overline{I^n}$ is  constant for all $n\geqslant \overline{\dstab}(I)$.  As an application, we classify the class of monomial ideals $I$ such that $\overline{I^n}$ is Cohen-Macaulay for  some integer $n\gg 0$.
\end{abstract}

% -----------------------------------------------------------
\maketitle
% -----------------------------------------------------------
\section*{Introduction}

Let $R = k[x_1,\ldots, x_r]$ be a polynomial ring over a field $k$ and $\afr$ a homogeneous ideal in $R$.  It was shown by Brodmann \cite{B}  that $\depth R/\afr^n$ is  constant for $n\gg 0$. The smallest integer $m>0$ such that $\depth R/\afr^n = \depth R/\afr^m$ for all $n\geqslant m$ is called the index of depth stability and is denoted by $\dstab(\afr)$. Since the behavior of depth function $\depth R/\afr^n$ is quite mysterious (see \cite{HH1, HHTT}), it is of great interest to bound $\dstab(\afr)$ in terms of $r$ and $\afr$. However, until now this problem is only solved for a few classes of monomial ideals (see, e.g.,  \cite{HH1, HQ, Tr2}). The bound obtained in \cite{Tr2} for ideals generated by square-free monomials of degree two is rather small and optimal. However, this problem is still open for a general square-free monomial ideal.

In this direction, it is also of interest to consider similar problems for other powers of $\afr$. In \cite{HKTT} together with Kimura and Terai we were able to solve the problem of bounding the index of depth stability for symbolic powers of square-free monomial ideals. In this paper we are interested in bounding the index of depth stability $\overline{\dstab}(\afr)$ for integral closures,  which is defined as the smallest integer $m>0$ such that $\depth R/\overline{\afr^n} = \depth R/\overline{\afr^m}$ for all $n\geqslant m$.  Like in the case of ordinary powers, $\overline{\dstab}(\afr)$ is well-defined. We  only consider the problem for monomial ideals $I$. In this context one can use geometry and  convex analysis to describe the integral closures of $I^n$ (see Definition \ref{NP} and some properties after it). Then one can use Takayama's formula (see Lemma \ref{Takay}) to compute the local cohomology modules of $R/\overline{I^n}$. This approach was successfully applied in several papers (see, e.g.,  \cite{HKTT, HT, Tr1}). In particular, one can show that in the class of monomial ideals the behavior of the function $\depth R/\overline{I^n}$ is much better than that of $\depth R/I^n$: it is ``quasi-decreasing" (see Lemma \ref{DepthInc}) while the  function $\depth R/I^n$ can be any convergent non-negative numerical function  (see \cite{HHTT}). Our main result is Theorem \ref{DStab}, where we can give an upper bound on $\overline{\dstab}(I)$ in terms of $r$ and the maximal generating degree $d(I)$ of $I$ for any monomial ideal $I$. Although our bound is very big, an example shows that an upper  bound must depend on $d(I)$, and in the worst case must be an exponential function of $r$.

In order to bound $\overline{\dstab}(I)$ we have to study the  index of stability for the  associated primes on $R/\overline{I^n}$. This in some sense  corresponds the zero depth case and was firstly done in \cite{Tr1}. In this paper we can improve the main result of \cite{Tr1} by giving an essentially better bound, see Theorem \ref{AssTh}.

As an application we classify all monomial ideals such that $R/\overline{I^n}$ is a Cohen-Macaulay ring for all $n\geqslant 1$ (or for some fixed $n= n_0\gg 0$). It turns out  that only equimultiple ideals have this property, see Theorem \ref{CM}. In the case of square-free monomial ideals, we can then derive a criterion for the Cohen-Macaulayness of $R/\overline{I^n}$  for some fixed $n\geqslant 3$, see Theorem \ref{CMDelta}. This criterion is exactly the one for the Cohen-Macaulayness of $R/{I^n}$ given in \cite[Theorem 1.2]{TT}.

The paper is organized as follows. In Section \ref{Ass} we study the stability of associated primes and give an upper bound on $\overline{\astab}(I)$ of a monomial ideal. In Section \ref{Depth} we prove the main Theorem \ref{DStab}. The study of Cohen-Macaulay property of $R/\overline{I^n}$ is done in  the last section.

\section{Stability of associated primes} \label{Ass}

Let $R := k[x_1, \ldots , x_r]$ be a polynomial ring over a field $k$ with the maximal homogeneous ideal $\mfr = (x_1, \ldots , x_r)$.  
Throughout this paper,  let $I$ be  a proper monomial ideal in $R$.  Let $\Nset,\ \Rset,\ \Rset_+$ be the set of non-negative integers, real numbers and non-negative real numbers, respectively.  For a vector $\albf =(\alpha_1,\ldots,\alpha_r)\in\Nset^r$, we denote by $\xbf^{\albf} = x_1^{\alpha_1}\cdots x_r^{\alpha_r}$.  

The {\it integral closure} of an arbitrary ideal $\afr$ of $R$ is the set of elements $x$ in $R$ that satisfy an integral relation
$$x^n + a_1x^{n-1} + \cdots + a_{n-1}x + a_n = 0,$$
where $a_i \in \afr^i$ for $i = 1,\ldots, n$. This is an ideal and is denoted by $\overline \afr$. 
The integral closure of  a monomial ideal $I$  is a monomial ideal as well. We can geometrically describe $\overline{I}$ by using its Newton polyhedron.

\begin{defn} \label{NP} Let $I$ be a monomial ideal of $R$. We define
\begin{enumerate}
\item  For a subset $A \subseteq R$, the exponent set of $A$ is $E(A) := \{\albf \mid \xbf^{\albf}\in A\} \subseteq  \Nset^r$.
\smallskip
\item The Newton polyhedron of $I$ is $NP(I) := \conv\{E(I)\}$, the convex hull of the exponent set of $I$ in
the space $\Rset^r$.
\end{enumerate}
\end{defn}

The following results are well-known (see \cite{RRV}):
\begin{equation}\label{EN1}
E(\overline I) = NP(I) \cap \Nset^r = \{\albf \in \Nset^r \mid \xbf^{n\albf} \in I^n \text{ for some } n\geqslant 1\}.
\end{equation}
\begin{equation}\label{EN2}
NP(I^n) =nNP(I)= n\conv\{E(I)\} +\Rset_{+}^r \ \text{for all}\ n\geqslant 1.
\end{equation}

Let $G(I)$ denote the minimal generating system of monomials of $I$ and
$$d(I) := \max\{\alpha_1 + \cdots + \alpha_r |\ \xbf^\albf \in G(I) \},$$
the maximal generating degree  of $I$. Let $\ebf_1,...,\ebf_r$ be the canonical basis of $\Rset^r$.
The first part of the following result is \cite[Lemma 6]{Tr1}. It gives  more precise information on the coefficients of defining equations of supporting hyperplanes of $NP(I)$.

\begin{lem}\label{NPH} The Newton polyhedron $NP(I)$ is the set of solutions of a system of inequalities of the form
$$\{\xbf\in\Rset^r \mid \left<\abf_j, \xbf\right> \geqslant b_j,\  j=1,\ldots,q\},$$
such that each hyperplane with the equation $\left<\abf_j,\xbf\right> = b_j$ defines a facet of $NP(I)$, which contains $s_j$ affinely
independent points of $E(G(I))$ and is parallel to $r - s_j$ vectors of the canonical basis. Furthermore, we can
choose $\zbf \ne \abf_j \in \Nset^r, b_j \in \Nset$ for all $j = 1, . . . , q$; and if we write $\abf_j=(a_{j1},\ldots,a_{jr})$, then
$$a_{ji} \leqslant s_jd(I)^{s_j-1} \ \text{ for all } i=1,\ldots, r,$$
where $s_j$ is the number of non-zero coordinates of $\abf_j$.
\end{lem}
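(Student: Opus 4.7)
The first two claims---that $NP(I)$ admits such a defining system whose facets carry $s_j$ affinely independent points of $E(G(I))$ and are parallel to $r-s_j$ canonical basis vectors, with $\zbf\neq\abf_j\in\Nset^r$ and $b_j\in\Nset$---are precisely \cite[Lemma 6]{Tr1}. My plan is therefore to accept these and focus on the new content, namely the coefficient bound $a_{ji}\leq s_j\,d(I)^{s_j-1}$.

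Fix a facet $F$ of $NP(I)$ with defining equation $\langle\abf_j,\xbf\rangle=b_j$. Since parallelism to $r-s_j$ coordinate directions forces the corresponding entries of $\abf_j$ to vanish, after permuting coordinates I may assume that $a_{j1},\ldots,a_{j,s_j}$ are the nonzero entries and that the $s_j$ affinely independent points $\gabf_1,\ldots,\gabf_{s_j}\in E(G(I))\cap F$ are determined by their projections $\tilde\gabf_l=(\tilde\gamma_{l,1},\ldots,\tilde\gamma_{l,s_j})$ onto $\Rset^{s_j}$. The affine hyperplane through these projected points is then the zero set of
$$D(\xbf)=\det\begin{pmatrix}1 & x_1 & \cdots & x_{s_j}\\ 1 & \tilde\gamma_{1,1} & \cdots & \tilde\gamma_{1,s_j}\\ \vdots & \vdots & & \vdots\\ 1 & \tilde\gamma_{s_j,1} & \cdots & \tilde\gamma_{s_j,s_j}\end{pmatrix},$$
and cofactor expansion along the top row identifies the coefficient of $x_i$ with $\pm C_i$, where $C_i$ is the $s_j\times s_j$ minor formed from the column of $1$s and the $\tilde\gamma$-columns indexed by $j\neq i$. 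After passage to the primitive non-negative normal provided by \cite[Lemma 6]{Tr1} (which can only shrink absolute values), it therefore suffices to prove $|C_i|\leq s_j\,d(I)^{s_j-1}$.

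The final step is the determinant estimate. I would expand $C_i$ along its column of $1$s to obtain $C_i=\sum_{l=1}^{s_j}(\pm 1)\,D_l$, where $D_l$ is the $(s_j-1)\times(s_j-1)$ determinant of $(\tilde\gamma_{k,j})_{k\neq l,\,j\neq i}$. Each row of this smaller matrix is a sub-row of some $\tilde\gabf_k$, so it consists of non-negative integer entries with total sum at most $d(I)$; in particular its Euclidean norm is at most $d(I)$. Hadamard's inequality then gives $|D_l|\leq d(I)^{s_j-1}$, and the triangle inequality on the $s_j$ terms yields $|C_i|\leq s_j\,d(I)^{s_j-1}$. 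The main delicate point I foresee is the normalization bookkeeping in the middle paragraph: one has to verify that replacing the raw determinantal vector $(|C_1|,\ldots,|C_{s_j}|,0,\ldots,0)$ by the positive-integer representative required in the statement does not inflate any coefficient. Once this is set up cleanly, Hadamard's inequality produces the stated bound with essentially no slack; the choice to pull off the column of $1$s \emph{before} invoking Hadamard, rather than directly applying a Leibniz-type bound to the full $s_j\times s_j$ minor, is exactly what replaces the naive factor $s_j!$ by the sharper factor $s_j$.
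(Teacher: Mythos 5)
Your proof is correct and takes essentially the same route as the paper's: write the facet hyperplane as a determinantal equation, read off the coefficient of $x_i$ as an $s\times s$ cofactor, expand that cofactor along its column of ones to obtain $s$ smaller $(s-1)\times(s-1)$ determinants, and bound each by $d(I)^{s-1}$ via Hadamard's inequality (using that each row has nonnegative entries summing to at most $d(I)$), finishing with the triangle inequality. The normalization worry you raise is moot: the statement only asserts the existence of \emph{some} $\abf_j\in\Nset^r$ with the bound, and the paper simply takes $\abf_j=(|a'_1|,\ldots,|a'_s|,0,\ldots,0)$ itself as that choice (the $a'_i$ all share a sign because, by \cite[Lemma 6]{Tr1}, the facet admits a nonnegative normal and the determinantal vector is a scalar multiple of it), so there is no further reduction to a primitive normal to account for.
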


\begin{proof} The first part of the lemma is \cite[Lemma 6]{Tr1}. Moreover, it also claims that $\abf_j \in \Rset_+^r$ and $b\in \Rset_+$.  For the second part,  let $H$ be a hyperplane which defines a facet of $NP(I)$.  W.l.o.g, we may assume that $H$ is defined by $s$ affinely independent points $\albf_1,...,\albf_s \in E(G(I))$ and is parallel to $r-s$ vectors  $\ebf_{s+1},\ldots,\ebf_r$.  Then the defining equation of $H$ can be written as
$$
\left | \begin{array}{cccc}
x_1& \cdots & x_s & 1\\
\alpha_{11}& \cdots & \alpha_{1s} & 1\\
\vdots & \vdots & \vdots & \vdots\\
\alpha_{s1}& \cdots & \alpha_{ss} & 1
\end{array}
\right | = 0.
$$
Expanding this determinant in the first row, we get: $a'_1x_1+\cdots+ a'_sx_s = b'$, where $a'_i$ are the $(1,i)$-cofactor for $i=1,\ldots,s$ and $b'$ is the $(1,s+1)$-cofactor of this determinant. Clearly, $a'_1,\ldots, a'_s,b'\in\Zset$.  Note that we may take $a_i = |a'_i|$ and $b= |b'|$. Expanding the determinant
$$
a'_1=\left | \begin{array}{cccc}
\alpha_{12}& \cdots & \alpha_{1s} & 1\\
\vdots & \vdots & \vdots & \vdots\\
\alpha_{s2}& \cdots & \alpha_{ss} & 1
\end{array}
\right | ,
$$
 in the last column, we get  
$$
(-1)^{s+1} a'_1=\left | \begin{array}{ccc}
\alpha_{22}& \cdots & \alpha_{2s}\\
\alpha_{32}& \cdots & \alpha_{3s}\\
\vdots & \vdots & \vdots\\
\alpha_{s2}& \cdots & \alpha_{ss}
\end{array}
\right |
-
\left | \begin{array}{ccc}
\alpha_{12}& \cdots & \alpha_{1s}\\
\alpha_{32}& \cdots & \alpha_{3s}\\
\vdots & \vdots & \vdots\\
\alpha_{s2}& \cdots & \alpha_{ss}
\end{array}
\right |
+\cdots+(-1)^{s-1}
\left | \begin{array}{ccc}
\alpha_{12}& \cdots & \alpha_{1s}\\
\alpha_{22}& \cdots & \alpha_{2s}\\
\vdots & \vdots & \vdots\\
\alpha_{s-1,2}& \cdots & \alpha_{s-1,s}
\end{array}
\right |.
$$
Let $\det(c_{ij})$ be a determinant in the above sum. By Hadamard's inequality, we have
$$(\det (c_{ij}))^2 \leqslant \Pi_{i=1}^{s-1}(\Sigma_{j=1}^{s-1}|c_{ij}|^2) \leqslant   \Pi_{i=1}^{s-1}(\Sigma_{j=1}^{s-1}|c_{ij}|)^2 \leqslant d(I)^{2(s-1)}.$$
Hence  $a_1 = |a'_1| \leqslant sd(I)^{s-1}$. Similarly, $a_i \leqslant sd(I)^{s-1}$ for $i=2,\ldots,r$, as required.
\end{proof}

 The following lemma is a crucial result in the study of the stability of  $\ass(R/\overline{I^n})$.
 
\begin{lem} \label{AssL2} Let $I$ be a monomial ideal in $R$ with $r > 2$. If $\mfr \in \ass R/\overline{I^s}$ for some $s\geqslant 1$, then
$$\mfr \in \ass R/\overline{I^n} \text{ for all } n\geqslant (r-1)r d(I)^{r-2}.$$
\end{lem}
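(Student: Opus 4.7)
The plan is to translate $\mfr\in\ass R/\overline{I^n}$ into a lattice-point condition on $n\cdot NP(I)$ via \eqref{EN1}--\eqref{EN2} and then exhibit a witness lattice point at every level $n\geq T:=(r-1)r\,d(I)^{r-2}$. The hypothesis is equivalent to the existence of $\albf\in\Nset^r$ with $\albf\notin s\cdot NP(I)$ and $\albf+\ebf_i\in s\cdot NP(I)$ for every $i=1,\dots,r$. Writing $NP(I)$ through the system of inequalities in Lemma \ref{NPH}, the set $J:=\{j:\langle\abf_j,\albf\rangle<sb_j\}$ of ``cutting'' facets satisfies $a_{j,i}\geq sb_j-\langle\abf_j,\albf\rangle\geq 1$ for every $j\in J$ and every $i$; hence every $j\in J$ has $s_j=r$, so the hypothesis extracts a \emph{closed} facet $H_{j^*}$ of $NP(I)$ whose normal $\abf_{j^*}$ has strictly positive integer entries.

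For $n\geq s$ the argument is a routine going-up: pick any vertex $\gabf\in E(G(I))\cap H_{j^*}$ (which exists by Lemma \ref{NPH}) and set $\bebf:=\albf+(n-s)\gabf\in\Nset^r$. Because $\gabf\in H_{j^*}$ the facet $H_{j^*}$ stays cutting at $\bebf$ with exactly the same margin $c:=sb_{j^*}-\langle\abf_{j^*},\albf\rangle$ it had at $\albf$, and because $\gabf\in NP(I)$ every other defining inequality of $nNP(I)$ is inherited from $sNP(I)$. The facet-by-facet check of $\bebf+\ebf_i\in nNP(I)$ then reduces to the inequalities $a_{j,i}\geq c$ supplied by the witness property of $\albf$.

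The delicate case is $n\in[T,s)$, which can occur only when $s>T$. Here I would build a witness from scratch using only the closed facet $H_{j^*}$. Select an index $i^*$ minimising $a_{j^*,i}$ over $i$, and a vertex $\gabf\in E(G(I))\cap H_{j^*}$ with $\gamma_{i^*}\geq 1$; such $\gabf$ exists because $\abf_{j^*}>\zbf$ prevents $H_{j^*}$ from lying inside $\{x_{i^*}=0\}$. Put $\bebf:=n\gabf-\ebf_{i^*}\in\Nset^r$. At $H_{j^*}$ the verification $\langle\abf_{j^*},\bebf+\ebf_i\rangle=nb_{j^*}-a_{j^*,i^*}+a_{j^*,i}\geq nb_{j^*}$ holds by minimality of $a_{j^*,i^*}$; at a facet $H_j$ not passing through $\gabf$ the verification reduces to $n\geq a_{j,i^*}$, and by Lemma \ref{NPH} this is at most $s_j d(I)^{s_j-1}\leq(r-1)d(I)^{r-2}$ whenever $H_j$ is not closed.

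The principal obstacle will be the remaining class: facets $H_j\neq H_{j^*}$ passing through $\gabf$, for which the check forces $a_{j,i}\geq a_{j,i^*}$ for all $i$, i.e., $i^*$ must simultaneously minimise the normal of \emph{every} facet of $NP(I)$ meeting $\gabf$. The plan is to handle this combinatorially by exploiting the $r$ affinely independent vertices of $H_{j^*}\cap E(G(I))$ from Lemma \ref{NPH}: select $\gabf$ among them so that no second closed facet passes through $\gabf$ (so every other facet through $\gabf$ has $s_j\leq r-1$), then refine the choice of $i^*$ against the at most $r-1$ such neighbouring facets. The multiplicative factor $r$ in $T$ is what provides slack for this enumeration, yielding the stated bound $n\geq(r-1)r\,d(I)^{r-2}$.
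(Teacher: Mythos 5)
The first half of your proposal is sound: the re-derivation that every ``cutting'' facet for the witness $\albf$ must have $s_j = r$ (because $a_{ji} \geq sb_j - \langle\abf_j,\albf\rangle \geq 1$ for all $i$) is a correct and clean substitute for the appeal to \cite[Lemma~13]{Tr1} that the paper uses, and the going-up step for $n\geqslant s$ is fine, though it merely re-proves the standard fact that $\{\ass R/\overline{I^n}\}_n$ is increasing \cite[Proposition~16.3]{HIO}, which the paper simply cites to reduce the whole lemma to showing $\mfr\in\ass R/\overline{I^m}$ for the single value $m=(r-1)rd(I)^{r-2}$.

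The genuine gap is in what you call the ``delicate case,'' and it is exactly where you flag a ``principal obstacle'' without resolving it. Working directly with all the facets of $NP(I)$ is the wrong level of generality, for two reasons that your sketch does not overcome. First, $NP(I)$ may have \emph{several} closed facets (with $s_j=r$), and for such a facet $H_j$ not through $\gabf$ your verification needs $n\geqslant a_{j,i^*}$, where Lemma~\ref{NPH} only gives $a_{j,i^*}\leqslant rd(I)^{r-1}$; this exceeds $T=(r-1)rd(I)^{r-2}$ whenever $d(I)>r-1$, so your bound ``$\leqslant(r-1)d(I)^{r-2}$ whenever $H_j$ is not closed'' leaves these facets completely uncontrolled. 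Second, for facets through $\gabf$ you need $a_{j,i}\geqslant a_{j,i^*}$ for every $i$ simultaneously with $i^*$ minimizing $a_{j^*,\cdot}$; there is no reason such a common minimizer exists, and it is also not clear one can always pick a vertex $\gabf$ on $H_{j^*}$ lying on no second closed facet. The paper sidesteps both problems at once with a reduction you are missing: it replaces $I$ by the ideal $J=(\xbf^{\albf_1},\ldots,\xbf^{\albf_r})$ generated only by the $r$ generators of $I$ lying on $H=H_{j^*}$. Then $NP(J)\subseteq NP(I)$ so membership in $mNP(J)$ suffices, while $H$ is still a facet of $NP(J)$ and $E(G(J))$ has at most $r$ points, so \emph{by Lemma~\ref{NPH} and minimality of the inequality system $H$ is the unique closed facet of $NP(J)$} and every other facet is axis-parallel, hence has all coefficients $\leqslant(r-1)d(I)^{r-2}$. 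Combined with using the barycenter $\albf=\frac1r(\albf_1+\cdots+\albf_r)$ rather than a single vertex — the barycenter lies in the relative interior of $H\cap NP(J)$, so $\langle\abf_j,\albf\rangle>b_j$ strictly for every other facet, giving the integral slack $\langle\abf_j,r\albf\rangle\geqslant rb_j+1$ — the paper makes the estimates close with the stated constant. Without the passage from $I$ to $J$, the facet-by-facet analysis does not terminate, so the proposal as written does not prove the lemma.
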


\begin{proof} Let $m := (r-1)rd(I)^{r-2}$. Since the sequence $\{\ass R/\overline{I^n}\}_{n\geqslant 1}$ is increasing by  \cite[Proposition 16.3]{HIO},  it suffices to show that $\mfr \in \ass R/\overline{I^m}$.

As $\mfr \in \ass R/\overline{I^s}$, by \cite[Lemma 13]{Tr1}, there is a supporting hyperplane of $NP(I)$, say $H$,  of the form $\left<\abf, \xbf \right> = b$ such that all coordinates of $\abf$ are positive. By Lemma \ref{NPH}, this hyperplane passes through $r$ affinely independent points of $E(G(I))$, say $\albf_1, \ldots ,\albf_r$. Let $J :=(\xbf^{\albf_1},\ldots,\xbf^{\albf_r})$. Clearly, $H$ is still a supporting plane of $NP(J)$. Again by Lemma \ref{NPH}, the Newton polyhedron $NP(J)$ can be represented by a system of inequalities
$$\{\xbf \in\Rset^r \mid \left<\abf_j,\xbf \right> = b_j, j=1,\ldots, q\},$$
where $\zbf \ne \abf_j\in\Nset^r$ and $b_j \in\Nset$. Let  $H_j =\{\xbf \in\Rset ^r\mid \left<\abf_j, \xbf\right> = b_j\}$ for $j=1,\ldots,q$. We may assume that  $q$ is minimal and $H_q = H$. Since $J$ is generated by exactly $r$ monomials and $q$ is taken to be minimal, by Lemma \ref{NPH}, each hyperplane $H_j$,  where $j\leqslant q-1$, must be parallel to at least one of the vectors $\ebf_1,\ldots,\ebf_r$.  Hence, by the second statement of Lemma \ref{NPH}, we may assume that
\begin{equation}\label{ET1}
a_{ji} \leqslant (r-1)d(J)^{r-2} \ \text{ for all } j \leqslant q-1\text{ and } i \leqslant r.
\end{equation}
Consider the barycenter $\albf := \frac{1}{r} (\albf_1 + \cdots + \albf_r)$ of the simplex $[\albf_1, \ldots ,\albf_r]$. Then $\albf$ is a relative interior point of the facet $H_q\cap NP(J)$ of $NP(J)$. Therefore, $\albf$ does not lie in $H_j$ for all $j =1,\ldots,q-1$, and so
\begin{equation} \label{ET1a} \left<\abf_j,\albf \right> > b_j \text{ for all } j \leqslant q-1. \end{equation}
Next, we may assume that $a_{qr} =\min\{a_{q1},\ldots,a_{qr}\} > 0$. Let $\bebf:= m\albf -\ebf_r$. Then $\bebf = (r-1)d(I)^{r-2} (\albf_1 + \cdots + \albf_r) - \ebf_r   \in \Zset^r$.  Since $\albf_1,...,\albf_r\in H_q$ are affinely independent and $ a_{q1}, ..., a_{qr}>0$, there exists $j\leqslant r$ such that $\albf_{jr} >0$, whence $\albf_{jr} \geqslant 1$. Hence $\bebf \in \Nset^r$. Moreover,
 $$\left<\abf_q,\bebf \right> =  m\left<\abf_q,\albf \right> - \left<\abf_q,\ebf_r\right> = m b_q-a_{qr} < m b_q.$$ 
 Therefore $\bebf \notin NP(J^m)$ and also $\bebf \notin NP(I^m)$ (recall that $H = H_q$). 

On the other hand, we claim that
\begin{equation} \label{ET1b} \bebf +\ebf_i \in NP(J^m) \ \text{ for all } i=1,\ldots, r. \end{equation}
Indeed,  for $i=r$,  $\bebf + \ebf_r = m\albf \in mNP(J) = NP(J^m)$. For $i \leqslant r-1$, we have
$$\begin{array}{ll} \left<\abf_q,\bebf +\ebf_i\right> & = \left<\abf_q, m\albf -\ebf_r+\ebf _i\right> \\
& = m\left<\abf_q,\albf\right> -\left<\abf_q,\ebf_r\right>+\left<\abf_q,\ebf_i\right> \\
& = mb_q-a_{qr}+a_{qi} \\
& \geqslant mb_q \ \  (\text{since } a_{qr} =\min\{a_{q1},\ldots,a_{qr}\} ) .\end{array}$$
Let $j \leqslant q-1$.  Since $r\albf = \albf_1+\cdots+\albf_r\in NP(J^r)\cap \Nset^r$,  by (\ref{ET1a}), we have $\left<\abf_j,r\albf \right> > rb_j$, which implies  $\left<\abf_j,r\albf\right> \geqslant rb_j +1$. Hence
$$\begin{array}{ll}
\left<\abf_j,\bebf +\ebf_i\right> &=\left<\abf_j,m\albf -\ebf_r+\ebf_i\right> \\
& = (r-1)d(I)^{r-2}\left<\abf_j,r\albf \right> -\left<\abf_j,\ebf_r\right>+\left<\abf_j,\ebf_i\right>\\
&\geqslant (r-1)d(I)^{r-2} (rb_j+1) - a_{jr}+a_{ji} \\
& =  mb_j +((r-1)d(I)^{r-2}  -a_{jr}) + a_{ji} \\
& \geqslant mb_j \ \ \text{(by (\ref{ET1}))} . \end{array}$$
This completes the proof of (\ref{ET1b}). 

Since  $NP(J^m)  = m NP(J) \subseteq m NP(I) = NP(I^m)$,   $\bebf+\ebf_i\in NP(I^m)$, whence  $\xbf^{\bebf}x_i \in \overline{I^m}$.  As shown above, $\bebf \not \in NP(I^m)$. Therefore, $\mfr \in \ass R/\overline{I^m}$, as required.
\end{proof}

A main tool in the study of the set of associated primes and the depth of rings is using local cohomology modules.  In the setting of monomial ideals,  one often uses a generalized version of a Hochster's formula given by Takayama in \cite{Ta}. Let us recall this formula here.

Since $R/I$ is an $\Nset^r$-graded algebra, $H_{\mfr}^i(R/I)$ is an $\Zset^r$-graded module over $R$.  For every degree $\albf\in\Zset^r$ we denote by $H_{\mfr}^i(R/I)_{\albf}$ the $\albf$-component of $H_{\mfr}^i(R/I)$.

Let $\Delta(I)$ denote the simplicial complex corresponding to the Stanley-Reisner ideal $\sqrt I$, i.e.
$$\Delta (I) =\{ \{i_1,...,i_s\} \subseteq [r]|\ x_{i_1}\cdots x_{i_s} \not\in \sqrt I\} ,$$
where $[r]$ denotes the set $\{1,2,...,r\}$.  For every $\albf = (\alpha_1,\ldots,\alpha_r) \in \Zset^r$, we define its co-support  to be the set $\Ga := \{i \ | \ \alpha_i < 0\}$.  For a subset $F$ of $[r]$, let $R_F := R[x_i^{-1} \ | \ i \in F]$.  
Set
\begin{equation} \label{EQ01}  \Da(I) = \{ F \subseteq [r]\setminus \Ga|\  \xbf^\albf \notin IR_{F\cup \Ga} \}. \end{equation}
We set $\widetilde{H}_i(\emptyset ;k) = 0$ for all $i$,   $\widetilde{H}_i( \{\emptyset \} ;k) = 0$ for all $i\neq -1$, and $\widetilde{H}_{-1} (\{ \emptyset\} ;k) = k$. Thanks to \cite[Lemma  1.1]{GH} we may formulate Takayama's formula as follows.

 \begin{lem}\label{Takay} {\rm (\cite[Theorem 2.2]{Ta})} $\dim_k H_{\mfr}^i(R/I)_{\albf} = \dim_k \widetilde{H}_{i-| \Ga|-1}(\Da (I);k).$
\end{lem}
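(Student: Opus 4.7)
The plan is to compute $H^i_{\mfr}(R/I)$ via the $\Zset^r$-graded \v{C}ech complex on the variables $x_1,\ldots,x_r$, and then for a fixed multidegree $\albf$ identify the $\albf$-graded strand of that complex with an augmented reduced simplicial cochain complex of $\Da(I)$ shifted by $|\Ga|$.

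First I would write down the \v{C}ech complex
$$\check{C}^\bullet:\ 0 \to R/I \to \bigoplus_{|F|=1}(R/I)_F \to \bigoplus_{|F|=2}(R/I)_F \to \cdots \to (R/I)_{[r]} \to 0,$$
where $(R/I)_F$ denotes the localization at $\prod_{i\in F}x_i$; its cohomology in degree $i$ is $H^i_\mfr(R/I)$. Since each $(R/I)_F$ is $\Zset^r$-graded, so is the complex. A direct inspection shows that $((R/I)_F)_\albf$ is a $k$-vector space of dimension at most one, and is one-dimensional precisely when $\alpha_i\geqslant 0$ for every $i\notin F$ (i.e.\ $\Ga\subseteq F$) and $\xbf^\albf\notin IR_F$.

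Next I would reindex: for each $F\supseteq\Ga$, write $F = \Ga\sqcup F'$ with $F'\subseteq[r]\setminus\Ga$. By the definition \eqref{EQ01}, the nonvanishing condition becomes $F'\in\Da(I)$. Hence the strand $(\check{C}^\bullet)_\albf$ has a $k$-basis indexed by the faces of $\Da(I)$, with the face $F'$ contributing in cohomological degree $|\Ga|+|F'|$. I would then verify that the \v{C}ech differential restricted to this strand sends the generator attached to $F'$ to the alternating sum of generators attached to the cofaces $F'\cup\{j\}$, matching the reduced simplicial coboundary of $\Da(I)$ up to a uniform sign fixed by an orientation of $[r]\setminus\Ga$; the key point is that enlarging $F$ only enlarges $IR_F$, so the differential never leaves the sub-complex supported on faces of $\Da(I)$.

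Combining these identifications yields
$$H^i_\mfr(R/I)_\albf \cong \Htil^{\,i-|\Ga|-1}(\Da(I);k),$$
and since $k$ is a field, reduced simplicial cohomology and homology in the same degree have the same $k$-dimension, giving the stated equality. The degenerate cases $\Da(I)=\emptyset$ and $\Da(I)=\{\emptyset\}$ are handled by the conventions $\Htil_i(\emptyset;k)=0$ and $\Htil_{-1}(\{\emptyset\};k)=k$ recalled immediately before the lemma. The main obstacle will be the bookkeeping of the degree shift by $|\Ga|$ and the sign matching between the \v{C}ech and simplicial differentials; once this is settled the statement reduces to a graded-piece count, and the role of \cite[Lemma 1.1]{GH} is precisely to supply the clean description ``$F'\in\Da(I)$'' so that the sub-complex of nonvanishing \v{C}ech pieces coincides on the nose with the reduced simplicial cochain complex.
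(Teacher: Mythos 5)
The paper gives no proof of this lemma; it cites \cite[Theorem 2.2]{Ta} directly, with the complex $\Da(I)$ written in the equivalent form supplied by \cite[Lemma 1.1]{GH}. Your \v{C}ech-complex argument is therefore not competing with an internal proof, but it is a correct reconstruction of the standard proof of Takayama's formula. The computation of the $\albf$-graded piece of the \v{C}ech complex on $x_1,\ldots,x_r$ --- a one-dimensional space for each $F$ with $\Ga\subseteq F$ and $\xbf^\albf\notin IR_F$, hence for each face $F'=F\setminus\Ga$ of $\Da(I)$, sitting in \v{C}ech degree $|\Ga|+|F'|$ --- is right, as is the identification of the resulting strand with the reduced cochain complex of $\Da(I)$ shifted by $|\Ga|$, the passage from cohomology to homology over the field $k$, and the handling of the degenerate cases $\Da(I)=\emptyset$ and $\Da(I)=\{\emptyset\}$.

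The one imprecision is the sign step. The discrepancy between the \v{C}ech sign and the simplicial coboundary sign on the transition from $F'$ to $F'\cup\{j\}$ is $(-1)^{|\{i\in\Ga\,:\,i<j\}|}$, which depends on $j$; so it is not a single \emph{uniform} sign ``fixed by an orientation of $[r]\setminus\Ga$''. It is instead absorbed by rescaling the basis vector attached to each face $F'$ by $\prod_{j\in F'}(-1)^{|\{i\in\Ga\,:\,i<j\}|}$, which gives a genuine isomorphism between the $\albf$-strand of the \v{C}ech complex and the shifted augmented cochain complex of $\Da(I)$. This is bookkeeping, as you anticipate, not a gap, but the phrasing as written does not quite describe what actually happens and would need to be corrected in a full write-up.
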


As an immediate consequence of this result is the  following``quasi-decreasing" property of the depth function $\depth  R/\overline{I^n}$. We don't know if this property holds for an arbitrary homogeneous  ideal.

\begin{lem}\label{DepthInc} For any monomial ideal $I$ of $R$, we have
\begin{enumerate}
\item $\depth R/\overline{I^m}\geqslant \depth R/\overline{I^{mn}} \text{ for all } m, n\geqslant 1$.
\medskip
\item $\lim_{n\rightarrow \infty}\depth R/\overline{I^n} =\dim R-\ell(I),$ where $\ell (I)$ denotes the analytic spread of $I$.
\end{enumerate}
\end{lem}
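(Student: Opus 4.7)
The plan is to prove both parts via Takayama's formula (Lemma \ref{Takay}), combined with the Newton polyhedron identity $NP(I^{mn}) = n\cdot NP(I^m)$ from (\ref{EN2}).

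For part~(1), the core step is a scaling identity for Takayama complexes: for every $\bebf\in\Zset^r$ with co-support $\Ga$ and every $n\geqslant 1$, setting $\albf:=n\bebf$ (which has the same co-support, since $n>0$), one has
\[\Da(\overline{I^{mn}}) = \Delta_\bebf(\overline{I^m}).\]
Granting this, Lemma \ref{Takay} yields $\dim_k H^i_\mfr(R/\overline{I^m})_\bebf = \dim_k H^i_\mfr(R/\overline{I^{mn}})_{n\bebf}$ for every $i$, so the set of cohomological degrees in which $R/\overline{I^m}$ has non-zero local cohomology is contained in the corresponding set for $R/\overline{I^{mn}}$; taking minima produces the desired inequality $\depth R/\overline{I^m}\geqslant \depth R/\overline{I^{mn}}$.

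The scaling identity I would establish using the characterization (immediate from (\ref{EN1}) and the definition of $R_{F\cup\Ga}$) that $\xbf^\albf\in \overline{I^k}R_{F\cup\Ga}$ iff there exists $\gabf\in NP(I^k)\cap\Nset^r$ with $\gabf_j\leqslant \albf_j$ for $j\notin F\cup\Ga$. The direction ``witness for $(\bebf,m)$ gives witness for $(n\bebf,mn)$'' is immediate via $\gabf'\mapsto n\gabf'$, using $NP(I^{mn})=n\cdot NP(I^m)$. The reverse direction is the only delicate point and is the main obstacle: given a witness $\gabf\in NP(I^{mn})\cap\Nset^r$ with $\gabf_j\leqslant n\bebf_j$, I would set $\gabf':=(\lceil\gabf_1/n\rceil,\ldots,\lceil\gabf_r/n\rceil)\in\Nset^r$. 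Then $\gabf'\geqslant \gabf/n\in NP(I^m)$, and since $NP(I^m)$ is closed under $\Rset_+^r$-translation one obtains $\gabf'\in NP(I^m)\cap\Nset^r$. The integrality of each $\bebf_j$ is what makes the bound $\gabf_j\leqslant n\bebf_j$ survive ceiling division: $\lceil\gabf_j/n\rceil\leqslant\bebf_j$ for $j\notin F\cup\Ga$. This rounding-up argument goes through cleanly precisely because $\bebf\in\Zset^r$.

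For part~(2), Brodmann's theorem applied to the Noetherian $\Nset$-graded algebra $\bigoplus_{n\geqslant 0}\overline{I^n}$ ensures that $\depth R/\overline{I^n}$ is eventually constant, so the limit exists (and by part~(1) it equals $\depth R/\overline{I^{n!}}$ for large $n$). The identification of its value with $\dim R-\ell(I)$ is a classical asymptotic depth computation carried out via the normalized Rees algebra $\overline{S}=\bigoplus_{n\geqslant 0}\overline{I^n}t^n$, whose special fiber has Krull dimension $\ell(I)$; this is the standard asymptotic depth formula for integral closures (see, e.g., \cite{HT}), and I anticipate no substantive obstacle beyond quoting it.
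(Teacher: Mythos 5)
Your part (1) is essentially the same approach as the paper: establish the scaling identity $\Delta_{n\bebf}(\overline{I^{mn}}) = \Delta_\bebf(\overline{I^m})$ for $\bebf \in \Zset^r$ (noting $CS_{n\bebf}=CS_\bebf$), then feed it into Takayama's formula to compare graded pieces of local cohomology. The paper obtains this identity in one display by invoking (\ref{EN1}) (a monomial lies in $\overline J$ iff its $n$-th power lies in $\overline{J^n}$) together with the compatibility of integral closure with the monomial localization $R_{F\cup\Ga}$; your version unfolds the same fact into a Newton-polyhedron ``witness'' characterization and a ceiling-function argument using $NP(I^{mn})=n\,NP(I^m)$. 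Both are correct, and your rounding argument is a legitimate way of making the integrality bookkeeping explicit.

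For part (2), your high-level plan points in the right direction, but the opening step is not a direct quote of Brodmann, and this is precisely the nontrivial point the paper has to work around. Brodmann-type stability of $\ass$ and $\depth$ is proved for finitely generated graded modules over a \emph{standard graded} (degree-one-generated) Noetherian $R$-algebra, typically the Rees algebra $\bigoplus_n \afr^n$. The normalized Rees algebra $\overline S = \bigoplus_n \overline{I^n}$ is Noetherian for a monomial ideal, but in general it is not standard graded: one can have $\overline I \cdot \overline{I^n} \subsetneq \overline{I^{n+1}}$. The paper routes around this by a Veronese reduction: by Vasconcelos, $J := \overline{I^{r-1}}$ is a normal ideal (\cite[Theorem 7.29]{Vs}), so $\overline{I^{m(r-1)}} = J^m$ for all $m$ (\cite[Corollary 7.60]{Vs}); the Eisenbud--Huneke asymptotic depth formula \cite[Proposition 3.3]{EH} is then applied to the single ideal $J$ and transferred back via $\ell(J)=\ell(I)$. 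This reduction is the actual content here; ``Brodmann applied to $\overline S$'' and ``the standard asymptotic depth formula for integral closures'' do not replace it, and your citation of \cite{HT} (a regularity paper) appears misplaced — the relevant references are \cite{Vs} and \cite{EH}. You should either cite these results explicitly or give the Veronese argument in detail.

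Finally, a small point common to both your write-up and the paper's: the identification $\lim_n \depth R/\overline{I^n} = \lim_m \depth R/J^m$ uses that the left-hand limit exists over all $n$, not merely along the subsequence $n=m(r-1)$. This does require a short additional remark — part (1) gives $\depth R/\overline{I^n}\geqslant \dim R-\ell(I)$ for every $n$, and the reverse eventual inequality needs either eventual monotonicity along all $n$ or an appeal to stability of depths of $R/\overline{I^n}$ — so it is worth spelling out rather than leaving implicit.
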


\begin{proof} 1) Replacing $I^m$ by $J$, it suffices to prove the statement for $m=1$. Let $t := \depth R/ \overline{I}$. Then we must have $H^t_{\mfr}(R/\overline{I})_{\albf} \ne 0$ for some $\albf \in \Zset^r$.  By Lemma \ref{Takay}, 
\begin{equation}\label{EI1}
\dim_k \Htil_{t-|\Ga|-1}(\Da(\overline{I});k) = \dim_k H^t_{\mfr}(R/\overline{I})_{\albf} \ne 0.
\end{equation}

For $n\geqslant 1$, we have $CS_{n\albf}= \Ga$ and
$$\Da(\overline{I}) = \{F\in\Delta \mid \xbf^{\albf} \notin \overline{I R_{F\cup \Ga}}\} = \{F\in\Delta \mid \xbf^{n\albf} \notin \overline{(I R_{F\cup \Ga})^n}\} = \Delta_{n\albf}(\overline {I^n}).$$
The middle equality follows from (\ref{EN1}). Together with Equation (\ref{EI1}) and Lemma \ref{Takay}, this fact implies that
$$\dim_k H^t_{\mfr}(R/\overline{I^n})_{n\albf} = \dim_k \Htil_{t -|CS_{n\albf}|-1}(\Delta_{n\albf}(\overline{I^n});k) = \dim_k \Htil_{t -|CS_{\albf}|-1}(\Delta_{\albf}(\overline{I});k) \ne 0.$$
This means $\depth R/\overline{I^n} \leqslant t$.

\medskip
2) Let $J:=\overline{I^{r-1}}$. By \cite[Theorem 7.29]{Vs}, $J$ is  torsion-free. Thus, by \cite[Proposition $3.3$]{EH}, we have
$$\lim_{m\rightarrow \infty}\depth R/J^m =\dim R-\ell(J).$$

For each $m\geqslant 1$, by  \cite[Corollary 7.60]{Vs}, we have $J^m = \overline{I^{m(r-1)}}$. Hence 
$$\lim_{n\rightarrow \infty}\depth R/\overline{I^n} = \lim_{m\rightarrow \infty}\depth R/J^m = \dim R-\ell(J).$$
Note that $\ell(J) =\ell(\overline{I^{r-1}}) = \ell(I^{r-1}) =\ell(I)$, so the desired equality follows.
\end{proof}

Let $F$ be a subset of $[r]$. Put $R[F] = k[x_i\mid i\notin F]$ and denote by $I[F]$ the ideal of $R[F]$ obtained from $I$ by setting $x_i = 1$ for all $i\in F$. Then 
$I[F] R = IR_F \cap R$.
If $F=\{i\}$ for some $i\in [r]$,  then we write $R[i]$ and $I[i]$ instead of $R[\{i\}]$ and $I[\{i\}]$ respectively.

\begin{rem} \label{Restriction} Let $I$ be a monomial ideal in $R$. Then,
\begin{enumerate}
\item Using (\ref{EN1}) it is easy to see that $\overline{I^n}[F] = \overline{I[F]^n}$ for any $n\geqslant 1$ (cf. \cite[Lemma 4.6]{HT}).
\item If $I$ is Cohen-Macaulay, then $I[F]$ is Cohen-Macaulay.
\end{enumerate}
\end{rem}

We can now give an improvement  of the main result, Theorem 16, in  \cite{Tr1}.

\begin{thm} \label{AssTh} Let $I$ be a monomial ideal of $R$ and
\begin{equation*}n_0(I):=
\begin{cases}
1 & \text{ if } \ell(I) \leqslant 2,\\
\ell(I)(\ell(I)-1)d(I)^{\ell(I)-2} & \text{ if } \ell(I) > 2.
\end{cases}
\end{equation*}
Then, $\ass R/ \overline{I^n} = \ass R/\overline{I^{n_0(I)}}$ for all $n\geqslant n_0(I)$.
\end{thm}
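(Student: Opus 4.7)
I would prove Theorem \ref{AssTh} by reducing to the case of the maximal ideal handled by Lemma \ref{AssL2}, via the restriction operation of Remark \ref{Restriction}. Every associated prime of $R/\overline{I^n}$ is of the form $P_F := (x_i \mid i \in F)$ for some $F \subseteq [r]$. Set $F^c := [r] \setminus F$, $R_F := R[F^c] = k[x_i \mid i \in F]$, and $I_F := I[F^c]$. By the standard restriction principle for monomial ideals combined with Remark \ref{Restriction}, $P_F \in \ass R/\overline{I^n}$ if and only if the maximal ideal $\mfr_F$ of $R_F$ lies in $\ass R_F/\overline{I_F^n}$; note also $d(I_F) \leqslant d(I)$. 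Since $\{\ass R/\overline{I^n}\}_{n \geqslant 1}$ is increasing, it suffices to show that every $P_F$ that ever appears in this sequence already lies in $\ass R/\overline{I^{n_0(I)}}$.

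Second, I would bound $|F|$ by $\ell(I)$ uniformly. Combining the quasi-decreasing property in Lemma \ref{DepthInc}(1) with the limit formula in Lemma \ref{DepthInc}(2) yields $\depth R/\overline{I^n} \geqslant r - \ell(I)$ for every $n$. Since $\depth R/\overline{I^n} \leqslant r - |F|$ whenever $P_F \in \ass R/\overline{I^n}$, we obtain $|F| \leqslant \ell(I)$.

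For each $P_F$ with $|F| \geqslant 3$, applying Lemma \ref{AssL2} inside $R_F$ (which has $|F|$ variables) to the ideal $I_F$ gives $\mfr_F \in \ass R_F/\overline{I_F^m}$ for every $m \geqslant (|F|-1)|F|\, d(I)^{|F|-2}$. The quantity $(s-1)s\, d(I)^{s-2}$ is non-decreasing in $s$ for $s \geqslant 3$, so $|F| \leqslant \ell(I)$ forces this bound to be $\leqslant n_0(I)$, yielding $P_F \in \ass R/\overline{I^{n_0(I)}}$.

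The residual cases $|F| \leqslant 2$ are the main technical obstacle, since Lemma \ref{AssL2} requires $r > 2$. The case $|F| = 0$ is vacuous. For $|F| = 1$, $R_F$ is a polynomial ring in one variable, so $\mfr_F \in \ass R_F/\overline{I_F^n}$ just says that $I_F$ is a proper nonzero ideal, which is independent of $n$; stability holds at $n = 1$. For $|F| = 2$, $R_F = k[x,y]$, and I would invoke Zariski's theorem that in a two-dimensional regular ring products of integrally closed ideals remain integrally closed, so $\overline{I_F^n} = \overline{I_F}^n$; Zariski's unique factorization of complete ideals into simple complete ones then gives that $\ass R_F/\overline{I_F}^n$ is independent of $n$. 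So stability holds at $n = 1 \leqslant n_0(I)$ in the small cases too, and combining all cases yields the theorem.
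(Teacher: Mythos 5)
Your proof is correct and takes essentially the same route as the paper: restrict $P_F$ to $\mfr_F$ in the smaller polynomial ring $R_F$ via Remark \ref{Restriction}, control $|F|$ using Lemma \ref{DepthInc} (so that $|F|\leqslant\ell(I)$, which ensures the bound coming from Lemma \ref{AssL2} in $R_F$ is at most $n_0(I)$), and handle the remaining small cases directly. The paper phrases this as an induction on $r$ and refers the details to the proof of \cite[Theorem 16]{Tr1}, whereas you give the argument flatly and explicitly; the correspondence $n_0(I[i])\leqslant n_0(I)$ in the paper plays exactly the role of your observation that $(s-1)s\,d(I)^{s-2}$ is non-decreasing in $s$. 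The one point where you diverge from the paper's source is the two-variable base case: the paper's proof (following \cite{Tr1}) invokes \cite[Proposition 16]{ME} for the constancy of $\ass R_F/\overline{I_F^n}$ when $|F|=2$, while you appeal to Zariski's theory of complete ideals in a two-dimensional regular local ring. Both are valid; your Zariski argument is a little compressed -- to conclude constancy one should split $\overline{I_F}$ (after localizing at $\mfr_F$) as a principal part $(f)$ times a product $K$ of simple $\mfr_F$-primary complete ideals, use $\overline{I_F^n}=\overline{I_F}^{\,n}=(f^n)K^n$, and check that $\ass R_F/(f^n)K^n$ equals the minimal primes of $(f)$ together with $\mfr_F$ (when $K\neq R_F$), independently of $n$ -- but the conclusion and the method are sound.
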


\begin{proof} Fix an index  $i\leqslant r$. It is well known that the analytic spread of  $I$ is equal to the minimal  number of generators of a minimal reduction of  $I$. Since $I[i]$ is obtained from $I$ by setting $x_i =1$, this implies that $\ell(I) \geqslant \ell(I[i])$  and $d(I[i]) \leqslant d(I)$. Hence  $n_0(I[i]) \leqslant n_0(I)$. Using this remark, Remark \ref{Restriction}(1) and Lemma \ref{DepthInc},  we can  prove the theorem by induction on $r$. The proof is similar to that of \cite[Theorem 16]{Tr1},  so we omit  details here.
\end{proof}

\noindent {\it Remark}. Set 
$$ \overline{\astab}(I) = \min\{m |\  \ass R/ \overline{I^n} = \ass R/\overline{I^m} \ \text{for all } n\geqslant m \}.$$
It can be called the  index of stability for the  associated primes of $R/\overline{I^n}$. An example given in \cite[Proposition 17]{Tr1} shows that an upper bound on $ \overline{\astab}(I)$ must be of the order $d(I)^{r-2}$, provided that $r$ is fixed. The coefficient of $d(I)^{r-2}$ in the upper bound given in \cite[Theorem 16]{Tr1} is $r 2^{r-1}$.

\section{Stability of Depth} \label{Depth}

In this section we study the stability index of the depth function $\depth R/\overline{I^n}$. It is clear that a simplicial complex $\Delta$ is defined by the set of its maximal faces, say $F_1,...,F_s$. In this case we write $\Delta = \left< F_1,...,F_s\right>$.  Keeping the notations in Lemma \ref{NPH}, we set $\supp(\abf_j) := \{i \mid a_{ji} \ne 0\}$.  
 We can describe $\Da(\overline{I^n})$  as follows.

\begin{lem} \label{FNPn} For any $\albf \in \Nset^r$ and $n\geqslant 1$, we have
$$\Da(\overline{I^n}) =\left< [r]\setminus \supp(\abf_j) \mid j\in \{1,\ldots,q\} \text{ and }  \left<\abf_j,\albf \right> <nb_j  \right>.$$
\end{lem}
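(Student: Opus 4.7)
The plan is to unwind the definition of $\Da(\overline{I^n})$ into a linear feasibility question on the Newton polyhedron and then read off the maximal faces using the halfspace description from Lemma \ref{NPH}. Since $\albf \in \Nset^r$ we have $\Ga = \emptyset$, so by (\ref{EQ01}),
\[
\Da(\overline{I^n}) = \{F\subseteq [r] \mid \xbf^{\albf} \notin \overline{I^n}R_F\}.
\]
Unravelling the localisation, $\xbf^{\albf}\in \overline{I^n}R_F$ iff there exists $\bebf\in\Nset^r$ with $\supp(\bebf)\subseteq F$ and $\xbf^{\albf+\bebf}\in \overline{I^n}$, which by (\ref{EN1})--(\ref{EN2}) is equivalent to the existence of an integer vector $\gabf = \albf+\bebf \in NP(I^n)$ with $\gamma_i = \alpha_i$ for all $i\notin F$. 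A monotonicity/ceiling argument based on $NP(I^n)+\Rset_+^r = NP(I^n)$ then lets me weaken ``integer $\gabf$'' to ``real $\gabf$'' throughout, and simultaneously dispose of the constraint $\gabf\geqslant\albf$.

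Next I translate this real feasibility through the halfspace presentation of Lemma \ref{NPH}, namely $NP(I^n)=nNP(I)=\{\xbf\mid \left<\abf_j,\xbf\right>\geqslant nb_j,\ j=1,\ldots,q\}$. Fixing $\gamma_i = \alpha_i$ for $i\notin F$, the $j$th constraint becomes
\[
\sum_{i\in F} a_{ji}\gamma_i \geqslant nb_j - \sum_{i\notin F} a_{ji}\alpha_i,
\]
where the $\gamma_i$ with $i\in F$ are free nonnegative reals. I split on whether $\supp(\abf_j)\cap F$ is empty: if it is nonempty, the left-hand side is unbounded above (all $a_{ji}\geqslant 0$), so the constraint is satisfiable; if it is empty, the constraint collapses to $\left<\abf_j,\albf\right>\geqslant nb_j$. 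Crucially, because $\abf_j\in\Nset^r$ makes each inequality monotone in every free coordinate, all ``flexible'' constraints can be met simultaneously by taking a single sufficiently large value for every free $\gamma_i$.

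Combining these observations, $\xbf^{\albf}\in \overline{I^n}R_F$ iff $\left<\abf_j,\albf\right>\geqslant nb_j$ for every $j$ with $\supp(\abf_j)\cap F = \emptyset$. Taking the contrapositive yields
\[
F\in \Da(\overline{I^n}) \iff \exists\, j \text{ with } F\subseteq [r]\setminus\supp(\abf_j) \text{ and } \left<\abf_j,\albf\right> < nb_j,
\]
which is exactly the claim that $\Da(\overline{I^n})$ is the complex generated by the sets $[r]\setminus\supp(\abf_j)$ with $\left<\abf_j,\albf\right> < nb_j$. The step I expect to nail down most carefully is the simultaneous satisfiability in the flexible case, since it is what allows the purely local hyperplane data of Lemma \ref{NPH} to translate directly into the global combinatorial description of $\Da(\overline{I^n})$; the integer-to-real reduction is routine once closure of $NP(I^n)$ under translation by $\Rset_+^r$ is observed.
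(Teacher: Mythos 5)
Your proof is correct and follows essentially the same route as the paper's: both unwind $\xbf^\albf \notin \overline{I^n}R_F$ to a feasibility question over the halfspace presentation of $NP(I^n)$ from Lemma \ref{NPH}, and both use the nonnegativity of the $\abf_j$ to note that any constraint $j$ with $\supp(\abf_j)\cap F\neq\emptyset$ can be killed by taking the $F$-coordinates large, so that feasibility reduces to the constraints with $\supp(\abf_j)\cap F=\emptyset$. The paper simply instantiates your ``simultaneous satisfiability'' step by the concrete choice $\xbf^\gabf=(x_{s+1}\cdots x_r)^m$ for a single large integer $m$, whereas you phrase it abstractly after a real relaxation; these are cosmetic variants of the same argument.
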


\begin{proof} Let $F\in \Da(\overline{I^n})$. We may assume that $F =\{s+1,\ldots,r\}$ for some $0\leqslant s\leqslant r$.  By Lemma \ref{NPH} and (\ref{EN2}), we can deduce  that $NP(I^n)$ is the set of solutions of the system
$$\{\xbf \in \Rset^r \mid \left<\abf_j, \xbf \right> \geqslant nb_j, \ j=1,\ldots,q\}.$$
Since $CS_{\albf} = \emptyset $, $\xbf^{\albf} \notin \overline{I^n}R_{F  }$ if and only if $\xbf^{\albf} \xbf^{\gabf} \notin \overline{I^n}$ for any monomial $\xbf^{\gabf}\in k[x_{s+1},\ldots,x_r]$. Taking $\xbf^{\gabf} = x_{s+1}^m\cdots x_r^m$,  where 
$$m> \max\{n b_j -  \left<\abf_j,\albf\right> \mid \ 1\le j\leqslant q\}, $$ 
is fixed, it implies that  there is $1\le p\leqslant q$ such that
$$\left<\abf_p,\albf+ m( \ebf_{s+1} + \ebf_r) \right> < nb_p .$$
Assume that there is $i\geqslant s+1$ such that $a_{pi} >0$. Then 
$$ \left<\abf_p,\albf+ m( \ebf_{s+1} + \ebf_r) \right> = \left<\abf_p,\albf\right> + m(a_{p(s+1)} + \cdots a_{pr}) \geqslant \left<\abf_p,\albf\right> + m >  nb_p ,$$ 
a contradiction. Hence $a_{p(s+1)} = \cdots = a_{pr} = 0$, whence $F \subseteq [r] \setminus \supp(\abf_p)$ . Then $\left<\abf_p,\albf \right> =  \left<\abf_p,\albf+ m( \ebf_{s+1} + \ebf_r) \right> < nb_p$. 

Conversely, assume that  there is $j\leqslant q$ such that $F \subseteq [r] \setminus \supp(\abf_j)$, i.e. $a_{j(s+1)} = \cdots = a_{jr} = 0$, and 
 $\left<\abf_j,\albf \right> <nb_j $. Then  for all monomials $\xbf^{\gabf}\in k[x_{s+1},\ldots,x_r]$, we have
 $ \left<\abf_j,\albf + \gabf  \right>   = \left<\abf_j,\albf \right>  < nb_j.$
 By (\ref{EN1}) and Lemma \ref{NPH}, this implies $\xbf^{\albf} \xbf^{\gabf} \notin \overline{I^n}$. From (\ref{EQ01}), we get that $F\in \Da(\overline{I^n})$. This completes the proof of the lemma.
 \end{proof}
 
 The following lemma is the main step in the proof of Theorem \ref{DStab}
 
 \begin{lem}\label{DStabL} Let $m\geqslant 1$ and $t:=\depth R/\overline{I^m}$. Assume that $H_{\mfr}^t(R/\overline{I^m})_{\bebf} \ne 0$ for some $\bebf\in \Nset^r$. If $r\geqslant 3$, then 
$$\depth R/\overline{I^n} \leqslant t \ \text{ for all } n\geqslant r(r^2 - 1)r^{r/2}(r-1)^rd(I)^{(r-2)(r+1)}.$$
\end{lem}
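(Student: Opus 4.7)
The plan is to find, for each $n \ge N$, a vector $\albf^{(n)} \in \Nset^r$ such that the simplicial complex $\Delta_{\albf^{(n)}}(\overline{I^n})$ equals $\Delta^* := \Delta_{\bebf}(\overline{I^m})$; since both vectors lie in $\Nset^r$ their co-supports are empty, so Lemma \ref{Takay} then forces
$$\dim_k H_{\mfr}^t(R/\overline{I^n})_{\albf^{(n)}} = \dim_k \Htil_{t-1}(\Delta^*; k) \ne 0,$$
giving $\depth R/\overline{I^n} \le t$. By Lemma \ref{FNPn}, the equality $\Delta_{\albf^{(n)}}(\overline{I^n}) = \Delta^*$ reduces to matching the set of violated hyperplanes: $\{j : \langle \abf_j, \albf^{(n)}\rangle < n b_j\} = T$, where $T := \{j : \langle\abf_j,\bebf\rangle < m b_j\}$.

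I will realize $\albf^{(n)}$ as an integer approximation of $n\ybf^*$, where $\ybf^*$ is a well-chosen rational point of the open region
$$C^* := \{\ybf \in \Rset_+^r \mid \langle\abf_j,\ybf\rangle < b_j \text{ for } j \in T,\ \langle\abf_j,\ybf\rangle > b_j \text{ for } j \notin T\}.$$
Non-emptiness of $C^*$ is easy: $\bebf/m$ lies in $\overline{C^*}$, and perturbing in the all-ones direction $\ebf_1+\cdots+\ebf_r$ moves strictly into $C^*$, because $\langle\abf_j,\ebf_1+\cdots+\ebf_r\rangle = \sum_i a_{ji} \ge 1$ whenever $\abf_j \ne \zbf$. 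Given $\ybf^*\in C^*$ with clearance $\delta := \min_j |\langle\abf_j,\ybf^*\rangle - b_j|>0$, the rounding estimate $|\langle\abf_j,\lceil n\ybf^*\rceil - n\ybf^*\rangle| \le \sum_i a_{ji} \le r^2 d(I)^{r-1}$ (by Lemma \ref{NPH}) shows that $\albf^{(n)} := \lceil n\ybf^*\rceil \in \Nset^r$ satisfies the required strict/weak inequalities as soon as $n\delta > r^2d(I)^{r-1}$.

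The main obstacle is extracting $\ybf^* \in C^*$ with a lower bound on $\delta$ depending on $r$ and $d(I)$ alone, not on the input $(\bebf, m)$. Since the arrangement $\{H_j\}$ depends only on $I$, $C^*$ is one of finitely many cells, and a standard Cramer/Hadamard argument yields rational vertices of $\overline{C^*}$ with denominators bounded by subdeterminants of the coefficient matrix; a further small shift in the direction $\ebf_1+\cdots+\ebf_r$ then places the point into the strict interior. To match the precise exponent $d(I)^{(r-2)(r+1)}$ in the stated $N$, I would use the refinement from the proof of Lemma \ref{AssL2}: only hyperplanes with $s_j = r$ can have coefficients of the maximum order $rd(I)^{r-1}$, whereas hyperplanes with $s_j < r$ enjoy the tighter Lemma \ref{NPH} bound $(r-1)d(I)^{r-2}$. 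Separating these two types of hyperplanes in the Hadamard estimate for the relevant subdeterminants should produce the claimed bound, with the factor $r^{r/2}(r-1)^r$ arising from Hadamard applied to the smaller-support hyperplanes and the factor $r(r^2-1)$ absorbing both the rounding error and the additional shift.
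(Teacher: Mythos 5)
Your high-level plan matches the paper's: identify the open cell $C_1$ (your $C^*$) cut out by the hyperplane arrangement, find a rational point of bounded denominator in it, scale by $n$ and round. But there are two genuine gaps that the paper closes with arguments you omit.

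First, and most importantly, you never prove that $\overline{C^*}$ is bounded, yet your "Cramer/Hadamard argument yields rational vertices" step presupposes it: an unbounded polyhedron need not have $r+1$ affinely independent vertices spanning a full-dimensional simplex, and a vertex plus a recession ray cannot be converted into an interior point of controlled denominator by "a small shift in the all-ones direction" (such a shift may leave the cell, since it increases every $\langle\abf_j,\cdot\rangle$, including for $j\in T$). Boundedness is not automatic here; the paper proves it via the Artinian-ness of $H_\mfr^t(R/\overline{I^m})$. Concretely, if some coordinate direction $\ebf_i$ had $a_{ji}=0$ for all $j\in T$, then $\bebf+s\ebf_i$ would lie in $C_m$ for every $s\gg 0$, producing infinitely many nonzero graded pieces of $H_\mfr^t(R/\overline{I^m})$ in the same simplicial-complex class — a contradiction. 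This is a cohomological input, not a polyhedral one, and your argument has no substitute for it.

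Second, your plan to ``separate the two types of hyperplanes in the Hadamard estimate'' to recover the exponent $d(I)^{(r-2)(r+1)}$ does not work as sketched: if even one hyperplane defining the vertex $\albf_0$ has full support $s_j=r$, its coefficients are only bounded by $rd(I)^{r-1}$, and that row in the Cramer determinant inflates $\delta$ beyond what the stated bound allows. The paper sidesteps this entirely: if any $\abf_j$ has $\supp(\abf_j)=[r]$, then by \cite[Lemma 14]{Tr1} together with Lemma \ref{AssL2}, $\mfr\in\ass R/\overline{I^n}$ for all $n\geqslant n^*$, so $\depth R/\overline{I^n}=0$ and the lemma is trivially true; in the remaining case every $\abf_j$ satisfies the uniform tight bound $a_{ji}\leqslant (r-1)d(I)^{r-2}$, and no mixing is ever needed. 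You should add this case split at the outset and then your Hadamard estimate would use a single bound cleanly, as the paper does.
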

\begin{proof} For simplicity, set $n^* : = r(r^2 - 1)r^{r/2}(r-1)^rd(I)^{(r-2)(r+1)}$. We keep the notations in Lemma \ref{NPH}.
\medskip

Assume that $\supp(\abf_j) = [r]$ for some $1\leqslant j \leqslant q$.  By \cite[Lemma 14]{Tr1} we have $\mfr \in \ass R/\overline{I^n}$ for all $n\gg 0$. By Lemma \ref{AssL2},  it yields $\mfr \in \ass R/\overline{I^n}$ for all $n\geqslant n^*$. Thus, $\depth R/\overline{I^n} = 0$ for $n\geqslant n^*$, and the lemma holds in this case.
\medskip

We now assume that $\supp(\abf_j) \ne [r]$ for all $j=1,\ldots,q$, i.e. the number of non-zero coordinates of $\abf_j$ is strictly less than $r$. By Lemma \ref{NPH}, we have
\begin{equation}\label{EDStabL1}
a_{ji} \leqslant (r-1)d(I)^{r-2} \ \text{ for } i=1,\ldots,r.
\end{equation}
Assume that
$$\left<\abf_j, \bebf \right> < b_j \text{ for } j =1,\ldots, p, $$
and
$$\left<\abf_j, \bebf \right> \geqslant b_j  \text{ for } j  =p+1,\ldots, q,$$
for some $0\leqslant p\leqslant q$. Then, by Lemma $\ref{FNPn}$,
$$\Delta_{\bebf}(\overline{I^m}) = \left<[r]\setminus \supp(\abf_j) \mid j=1,\ldots,p\right>.$$
By Lemma \ref{Takay}, we have
$$\dim_k \Htil_{t-1}(\Delta_{\bebf}(\overline{I^m});k) = \dim_k H_{\mfr}^t(R/\overline{I^m})_{\bebf} \ne 0.$$
Hence $\Delta_{\bebf}(\overline{I^m})$ is not acyclic. In particular,  $p\geqslant 1$.
For each $n \geqslant 1$, put
$$\Gamma(\overline{I^n}) := \{\albf \in \Nset^r \mid \Delta_{\albf}(\overline{I^n}) = \Delta_{\bebf}(\overline{I^m})\},$$
and 
\begin{equation}\label{EDStabL2}
C_n:=\{\xbf \in\Rset^r\mid \left<\abf_j, \xbf \right> < nb_j,  \left<\abf_l, \xbf \right> \geqslant  nb_l \text{ for } j \leqslant p; \  p+1\le l \leqslant q\} \subseteq \Rset_+^r.
\end{equation}
It is clear that $C_n = nC_1$.  By Lemma \ref{FNPn},   $C_n \cap \Nset^r \subseteq \Gamma(\overline{I^n})$.

Assume that  $C_n \cap \Nset^r  \neq \emptyset$.  Then  for any $\albf\in C_n \cap \Nset^r$,  by Lemma \ref{Takay}, we have
$$ \dim_k H_{\mfr}^t(R/\overline{I^n}))_{\albf}=\dim_k \Htil_{t-1}(\Delta_{\albf}(\overline{I^n});k)=\dim_k \Htil_{t-1}(\Delta_{\bebf}(\overline{I^m});k)\ne 0,$$
whence  $\depth R/ \overline{I^n} \leqslant t$. 

Thus, in order to complete the proof of the lemma, it remains to show that $C_n \cap \Nset^r \ne \emptyset$ for any $n\geqslant n^*$. Fix such an integer $n$.
\medskip

Since $\bebf \in C_m = mC_1$, $C_1\ne\emptyset$. First, we  prove that $C_1$ is bounded in $\Rset^r$. Assume  that $a_{ji} = 0$ for some $1\leqslant i \leqslant r$ and for all $j=1,\ldots,p$. Then, for any $s \gg 0$, by Formula (\ref{EDStabL2}) we get that $\bebf +s\ebf_i \in C_m$, which implies $\Delta_{\bebf+s\ebf_i}(\overline{I^m})=\Delta_{\bebf}(\overline{I^m})$.  Again by Lemma \ref{Takay},  we have
$$\dim_k H_{\mfr}^t(R/\overline{I^m})_{\bebf + s\ebf_i} = \dim_k \Htil_{t-1}(\Delta_{\bebf+s\ebf_i}(\overline{I^m});k) =  \dim_k \Htil_{t-1}(\Delta_{\bebf}(\overline{I^m});k)\ne 0.$$
This contradicts the Artiness of  $H_{\mfr}^t (R/\overline{I^m})$. Hence, for each $i\leqslant r$, there is $j_i\leqslant p$ such that $a_{j_ii} \geqslant 1$.
Let $\ybf=(y_1,\ldots,y_r)$ be an arbitrary point of $C_1 \subseteq \Rset_+^r$ .  Then for each $i\leqslant r$, we have
$y_i \leqslant a_{j_i i}y_i \leqslant \left<\abf_{j_i},\ybf \right>  < b_{j_i}$.  This implies that  $C_1$ is bounded and so is $C_n$.
\medskip

Let $\overline C_n$  be the closure of $C_n$ in $\Rset^r$ with respect to  the usual Euclidean topology. Then $\overline{C_n}$ is bounded as well. Moreover,
\begin{equation}\label{EDStabL3}
\overline C_n=\{\xbf \in\Rset^r\mid \left<\abf_j, \xbf \right> \geqslant nb_j , \left<\abf_l,\xbf \right> \leqslant nb_l \text{ for } j \leqslant p; \  p+1\le l\leqslant q\} \subseteq \Rset_+^r,
\end{equation}
and hence $\overline C_n$ is a polytope.

We next claim that $\overline C_1$ is full dimensional. Indeed, for any $\ybf  \in C_1$, by Formula (\ref{EDStabL2}) we can choose a real number $\varepsilon > 0$ such that for all real numbers $\varepsilon_1, \ldots,\varepsilon_r$ with $0 \leqslant \varepsilon_1,\ldots,\varepsilon_r\leqslant \varepsilon$, we have $\ybf +\varepsilon_1\ebf_1+\cdots+\varepsilon_r \ebf_r \in C_1$. This means that the parallelotope  $[y_1, y_1+\varepsilon] \times \cdots \times [y_r, y_r+\varepsilon]\subseteq C_1$ , and thus $\overline C_1$ is full dimensional in $\Rset^r$, as claimed.

 Since the polytope $\overline C_1$ is full dimensional, by the Decomposition Theorem for polyhedra (see \cite[Corollary 7.1.b]{S}), we can find $r+1$ vertices, say $\albf_0,\ldots,\albf_r$, of the polytope $\overline C_1$, which are affinely independent. Let $\albf =\frac{1}{r+1}(\albf_0+\cdots+\albf_r)$ be the barycenter of the $r$-simplex $[\albf_0, \albf_1, \ldots , \albf_r]\subseteq \overline{C_1}$.

For each $i\leqslant r$, set  $\lambda_i =  \lceil \alpha_i\rceil -\alpha_i \geqslant 0$, where $ \lceil \alpha_i \rceil$ is the least integer which is bigger than or equal $\alpha_i$. Then $\lambda_1+\cdots+\lambda_r < r$ and $\gabf :=  n\albf +\lambda_1\ebf_1+\cdots+\lambda_r \ebf_r\in \Nset^r$.  In order to show $C_n\cap \Nset^r \neq \emptyset $, it suffices to show that
$\gabf \in C_n.$

Since $\albf\in \overline {C_1}$,  by Formula (\ref{EDStabL3}), we have
$$\left<\abf_l,\gabf \right> = \left<\abf_l, n\albf \right> + \left<\abf_l,\sum_{i=1}^r\lambda_i\ebf_i\right>\geqslant  n\left<\abf_l,\albf \right> \geqslant nb_l,$$
for all $l = p+1,\ldots,q$.  

Now, fix an index $j \leqslant p$.  Since $\albf_0,\albf_1,\ldots,\albf_r$ are affinely independent in $\Rset^r$, there is at least one point not lying in the hyperplane
$\left<\abf_j,\xbf \right> = b_j$. We may assume that $\albf_0$ is such a point. From Formula (\ref{EDStabL3}) we then have
\begin{equation}\label{EDStabL4}
\left<\abf_j,\albf_0\right> < b_j \ \text{ and } \left<\abf_j,\albf_i\right> \leqslant b_j \text{ for all } i=1,\ldots, r.
\end{equation}
Since $\albf_0$ is a vertex of the polytope $\overline C_1$, by \cite[Formula 23 in Page 104]{S}, $\albf_0$ can be represented as the unique solution of a system of linear equations of the form:
$$\left<\abf_h,\xbf \right>  = b_h \ \text{ for  all } h \in S \subseteq \{1,\ldots,q\},$$
where $|S| = r$. By Cramer's rule we have
$ \alpha_{0i} = \delta_i/{\delta }$  for all $i\leqslant r$, where $\delta ,\ \delta_1, \ldots ,\ \delta_r \in \Nset$ and $\delta$ is the absolute value of the determinant of this system of linear equations. In particular, $\delta  \albf_0\in\Nset^r$.
Using the inequalities (\ref{EDStabL1}) and  Hadamard's inequality applied to $\delta$, we get 
\begin{equation}\label{EDStabL7}
\delta  \leqslant r^{r/2}(r-1)^rd(I)^{r(r-2)}.
\end{equation}
By  (\ref{EDStabL4}) we have $\left<\abf_j, \delta \albf_0\right> < \delta b_j$, whence $\left<\abf_j, \delta \albf_0\right> \leqslant \delta b_j-1$ because $\abf_j , \ \delta \albf_0\in\Nset^r$. Let $c = n/ (r+1)\delta $, then by (\ref{EDStabL7}),  $c\geqslant r(r-1)d(I)^{r-2}$. We then have
\begin{align*}
\left<\abf_j, n \albf \right>  &= c \delta \left<\abf_j, (r+1)\albf\right> \\
& = c\delta  \sum_{i=0}^{r}\left<\abf_j, \albf_i \right>\\
& = c \left<\abf_j, \delta \albf_0 \right>+ c\delta   \sum_{i=1}^{r}\left<\abf_j, \albf_i \right>  \\
&\leqslant c(\delta b_j-1)+ rc \delta b_j \\
& = nb_j - c.
\end{align*}
Hence
\begin{align*}
\left<\abf_j, \gabf  \right> &= \left<\abf_j, n \albf \right>+\left<\abf_j, \sum_{i=1}^r \lambda_i\ebf_i\right> \\
& = \left<\abf_j,  n \albf \right>+ \sum_{i=1}^r \lambda_i a_{ji}\\
& < nb_j - c + r (r-1)d(I)^{r-2}  \ \  \text{(by (\ref{EDStabL1}) and } \sum \lambda_i < r) \\
& \leqslant nb_j.
\end{align*}
So $\left<\abf_j, \gabf \right> \leqslant  nb_j-1$, for all $j \leqslant p$.  This means that $\gabf \in  C_n$, as required.
\end{proof}

We are now in position to prove the  main result of this section.

\begin{thm}\label{DStab}  Let $I$ be a monomial ideal of $R$.  Let
 \begin{equation*}n_1(I):=
\begin{cases}
1 & \text{ if } r \leqslant 2,\\
r(r^2 - 1)r^{r/2}(r-1)^r d(I)^{(r-2)(r+1)}  & \text{ if } r > 2.
\end{cases}
\end{equation*}
Then, $\depth R/\overline{I^n} = \dim R - \ell(I)$ for all $n \geqslant n_1(I)$.
\end{thm}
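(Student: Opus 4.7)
The argument is by strong induction on $r$. The base cases $r\le 2$ (where $n_1(I)=1$) are handled by direct verification that $\depth R/\overline{I^n}=\dim R-\ell(I)$ for every $n\ge 1$: for $r=1$ the quotient is Artinian; for $r=2$ one analyzes the two cases $\ell(I)=2$ (where $I$ is primary and $R/\overline{I^n}$ is Artinian) and $\ell(I)\le 1$ (where $I$ is principal and $\overline{I^n}$ is a pure monomial power of depth $1$).

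For $r\ge 3$, write $d_\infty:=\dim R-\ell(I)$ and, for a subset $F\subsetneq [r]$, let $\mfr_F$ denote the maximal homogeneous ideal of $R[F]$. The inequality $\depth R/\overline{I^n}\ge d_\infty$ for every $n\ge 1$ is immediate: take a multiple $kn$ with $\depth R/\overline{I^{kn}}=d_\infty$ (possible by Lemma~\ref{DepthInc}(2)) and apply Lemma~\ref{DepthInc}(1). For the reverse inequality, fix $n\ge n_1(I)$, pick $m\gg 0$ with $\depth R/\overline{I^m}=d_\infty$, and choose $\bebf\in\Zset^r$ with $H^{d_\infty}_\mfr(R/\overline{I^m})_\bebf\ne 0$. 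If $\bebf\in\Nset^r$, then $n_1(I)$ matches exactly the threshold of Lemma~\ref{DStabL}, which immediately yields $\depth R/\overline{I^n}\le d_\infty$.

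The substantive case is $F:=CS_\bebf\ne\emptyset$. Let $\bebf'\in\Nset^{r-|F|}$ be the restriction of $\bebf$ to the coordinates outside $F$; by definition of $F$ it is automatically non-negative. The key step is the identification
\[
\Delta_{\bebf}(\overline{I^m})=\Delta_{\bebf'}(\overline{I[F]^m})
\]
of simplicial complexes on $[r]\setminus F$, which follows from Remark~\ref{Restriction}(1) together with the observation that the coordinate projection $\Rset^r\to\Rset^{r-|F|}$ sends $NP(I^m)$ onto $NP(I[F]^m)$. Lemma~\ref{Takay} then produces $H^{d_\infty-|F|}_{\mfr_F}(R[F]/\overline{I[F]^m})_{\bebf'}\ne 0$, so $\depth R[F]/\overline{I[F]^m}\le d_\infty-|F|$. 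The induction hypothesis applied to $I[F]\subset R[F]$ (valid since $r-|F|<r$, $d(I[F])\le d(I)$, and the defining formula for $n_1$ is monotone in both inputs, so $n_1(I[F])\le n_1(I)$) gives $\depth R[F]/\overline{I[F]^n}=(r-|F|)-\ell(I[F])$ for every $n\ge n_1(I)$; combined with the general inequality $\ell(I[F])\le\ell(I)$, this forces $\ell(I[F])=\ell(I)$ and hence $\depth R[F]/\overline{I[F]^n}=d_\infty-|F|$ throughout that range.

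Finally, pick any witness $\bebf''\in\Zset^{r-|F|}$ of $H^{d_\infty-|F|}_{\mfr_F}(R[F]/\overline{I[F]^n})_{\bebf''}\ne 0$, set $F'':=CS_{\bebf''}\subseteq[r]\setminus F$, and define the lift $\widetilde{\bebf}\in\Zset^r$ by $\widetilde{\bebf}_i=\bebf''_i$ for $i\notin F\cup F''$ and $\widetilde{\bebf}_i=-1$ for $i\in F\cup F''$. Iterating the restriction identity yields $\Delta_{\widetilde{\bebf}}(\overline{I^n})=\Delta_{(\bebf'')^+}(\overline{I[F\cup F'']^n})$ (with $(\bebf'')^+$ the positive part of $\bebf''$), and Lemma~\ref{Takay} on $R$ therefore produces $H^{d_\infty}_\mfr(R/\overline{I^n})_{\widetilde{\bebf}}\ne 0$, giving $\depth R/\overline{I^n}\le d_\infty$ as required. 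The main technical hurdle is establishing (and then iterating) the restriction identity above; once it is in hand, the reduction to fewer variables is automatic and Lemma~\ref{DStabL} supplies the essential positive-witness case, while the monotonicity of $n_1$ and the $r\le 2$ verifications are routine.
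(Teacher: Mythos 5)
Your overall strategy coincides with the paper's: induction on $r$, with Lemma~\ref{DStabL} supplying the case of a nonnegative witness degree, and restriction to $R[F]$ plus the induction hypothesis handling the case $CS_\bebf\neq\emptyset$. The lower bound $\depth R/\overline{I^n}\ge\dim R-\ell(I)$ via Lemma~\ref{DepthInc} is exactly as in the paper.

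Where you diverge: in the negative-coordinate case you unfold the inequality $\depth R/\overline{I^n}\le|CS_\bebf|+\depth R[F]/\overline{I[F]^n}$ by constructing the lifted witness $\widetilde\bebf$ directly, whereas the paper cites \cite[Lemma~1.3]{HKTT} for it. Your lifting is correct once one checks (as you do via the restriction identity) that $\Delta_{\widetilde\bebf}(\overline{I^n})=\Delta_{\bebf''}(\overline{I[F]^n})$ and that the Takayama homology index $d_\infty-|F\cup F''|-1$ matches on both sides. The intermediate observation that the induction hypothesis forces $\ell(I[F])=\ell(I)$ is a nice (if unnecessary) step; the paper simply compares $\depth R[F]/\overline{I[F]^m}$ with $\depth R[F]/\overline{I[F]^n}$ without invoking analytic spreads at that point. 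You should, however, explicitly rule out $F=[r]$: in that case $\Delta_\bebf(\overline{I^m})$ is the void complex (since $IR_{[r]}=R_{[r]}$), so the witness cannot have all coordinates negative; your construction tacitly assumes $F\subsetneq[r]$ but does not justify it.

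The genuine gap is the base case $r=2$. You assert that $\ell(I)=2$ forces $I$ to be $\mfr$-primary and $R/\overline{I^n}$ Artinian. This is false: $I=(x_1^2,x_1x_2)\subset k[x_1,x_2]$ has $\ell(I)=2$ (the segment from $(2,0)$ to $(1,1)$ is a compact facet of $NP(I)$) but $\height(I)=1$ and $R/\overline{I^n}$ has dimension $1$. The conclusion $\depth R/\overline{I^n}=0$ still holds, but not for the reason you give; the correct justification, used by the paper, is that for $r\le 2$ the set $\ass(R/\overline{I^n})$ is already constant in $n$ by \cite[Proposition~16]{ME}, so $\depth R/\overline{I^n}\in\{0,1\}$ is constant and equals the limit $\dim R-\ell(I)$ from Lemma~\ref{DepthInc}(2). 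Likewise your $\ell(I)\le 1$ subcase is essentially right but should say that $\overline{I^n}$ (not $I$ itself) is principal.

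Aside from that flaw in the base case and the unaddressed $F=[r]$ degeneracy, the argument is sound and follows the same route as the paper's proof.
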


\begin{proof} We prove the theorem by induction on $r$. If $r \leqslant 2$, and $I \neq 0$, then $\depth R/\overline{I^n} = 0,1$, and $\depth R/\overline{I^n} = 0$ if and only if $\mfr \in \ass(R/\overline{I^n})$. Since $\ass(R/\overline{I^n})$ is constant for all $n\geqslant 1$ in this case (by \cite [Proposition 16]{ME}), we get
$$\depth R/\overline{I^n} = \depth R/\overline{I} \ \text{ for all } n\geqslant 1,$$
and the theorem follows from Lemma \ref{DepthInc}(2).

Assume that $r\geqslant 3$. By virtue of Lemma \ref{DepthInc}(2) and symmetry,  it suffices to show that  
\begin{equation}\label{EDStab1} \depth R/\overline{I^n} \leqslant \depth R/ \overline{I^m},
\end{equation}
for any $m, n \geqslant n_1(I)$.  Let $t := \depth R/ \overline{I^m}$. As $H^t_{\mfr}(R/\overline{I^m}) \ne 0$, by Lemma \ref{Takay}, there is $\bebf \in \Zset^r$ such that 
\begin{equation}\label{EDStab2}
\dim_k \Htil_{t -|CS_{\bebf}|-1}(\Delta_{\bebf}(\overline{I^m});k)  = H^t_{\mfr}(R/\overline{I^m})_{\bebf}\ne 0.
\end{equation}
In particular, $\Delta_{\bebf}(\overline{I^m}) \ne \emptyset$.  If   $CS_{\bebf} = \emptyset$, i.e., $\bebf\in\Nset^r$, then (\ref{EDStab1}) follows from Lemma \ref{DStabL}.  

We now assume that $CS_{\bebf} \ne \emptyset$. Without loss of generality, we may assume that
$CS_{\bebf} = \{s + 1, \ldots , r\}$ for some integer $0\leqslant s \leqslant r$. If $s = 0$, i.e.  $CS_{\bebf} = [r]$, then $\Delta_{\bebf}(\overline {I^m}) = \{\emptyset\}$. By Lemma \ref{Takay}, it follows that $H^0_{\mfr}(R/ \overline{I^m}) = 0$, which is equivalent to $\mfr \in \ass R/\overline{I^m}$. Since $r\geqslant 3$, $n\geqslant n_1(I) > (r-1)rd(I)^{r-2}$,  $\mfr \in\ass R/\overline{I^n}$ by Lemma \ref{AssL2}. Hence, $\depth R/\overline{I^n} = 0 = \depth R/\overline{I^m}$ in this case.

Assume that $s \geqslant 1$. Let $R' := k[x_1,\ldots,x_s] = R[\{s+1,...,r\}]$ (in the notation before Remark \ref{Restriction}) and $I':= I[\{s+1,...,r\}] \subseteq  R'$. Let $\bebf = (\beta_1,\ldots , \beta_r)$ and  $\bebf' := (\beta_1,\ldots , \beta_s) \in \Nset^s$. Then,  by Formula (\ref{EQ01}), $\Delta_{\bebf'}(\overline {I'^m}) = \Delta_{\bebf}(\overline{I^m})$. Let $\nfr  := (x_1,\ldots,x_s)$ be the maximal homogeneous ideal of $R'$. Using (\ref{EDStab2}) and  Lemma \ref{Takay} we obtain
\begin{align*}
\dim_k H_{\nfr}^{t - |CS_{\bebf}|} (R'/\overline{I'^m})_{\bebf'} &=\dim_k \Htil_{t-|CS_{\bebf}|-1}(\Delta_{\bebf'}(\overline{I'^m});k)\\
&=\dim_k \Htil_{t-|CS_{\bebf}|-1}(\Delta_{\bebf}(\overline{I^m});k) \ne 0.
\end{align*}
Hence $I' \neq R'$ and  $\depth R'/\overline{I'^m} \leqslant t -|CS_{\bebf}|$, or equivalently $\depth R/\overline{I^m} \geqslant |CS_{\bebf}| + \depth R'/\overline{I'^m}$.  On the other hand, by \cite[Lemma 1.3]{HKTT}, we have $\depth R/\overline{I^m} \leqslant |CS_{\bebf}| + \depth R'/\overline{I'^m}$ and $\depth R/\overline{I^n} \leqslant  |CS_{\bebf}| + \depth R'/\overline{I'^n}$. Hence
$$ \depth R/\overline{I^m} = |CS_{\bebf}| + \depth R'/\overline{I'^m},$$
and (noticing that $n_1(I') \leqslant n_1(I)$, since $d(I')\leqslant d(I)$ and $s\leqslant r$)
$$\begin{array}{ll}
\depth R/\overline{I^n} & \leqslant |CS_{\bebf}|+ \depth R'/\overline{I'^n} \\
& =  |CS_{\bebf}|+ \depth R'/\overline{I'^m}  \ \ \text{(by the induction hypothesis) } \\
& =\depth R/\overline{I^m}.
\end{array}$$
\end{proof}

\noindent {\it Remark}. Set 
$$ \overline{\dstab}(I) = \min\{m |\  \depth R/ \overline{I^n} = \depth R/\overline{I^m} \ \text{for all } n\geqslant m\}.$$
One can call it  the index of depth stability  for integral closures. Then Theorem \ref{DStab} says that  $\overline{\dstab}(I)\le n_1(I)$.  It seems that this bound is too big. However, an example given in \cite[Proposition 17]{Tr1} shows that an upper bound on $ \overline{\dstab}(I)$ must be at least of the order $d(I)^{r-2}$.

\section{Cohen-Macaulay property} \label{CMness}

In this section we  apply  results in previous sections to study the  Cohen-Macaulayness of integeral closures of powers of  monomial ideals. We say that $I$ is equimultiple if $\ell(I) = \height(I)$. Note that, by \cite[Theorem 2.3]{BA}, we can compute $\ell(I)$ in terms of geometry of  $NP(I)$.
$$\ell(I) = \max\{\dim F + 1 \mid F \text{ is a compact face of } NP(I)\}.$$
Therefore, the condition $I$ being equimultiple is independent on the characteristic of the base field $k$.

\begin{thm}\label{CM} Let $I$ be a monomial ideal of $R$. The following conditions are equivalent
\begin{enumerate}
\item $R/\overline{I^n}$ is a Cohen-Macaulay ring for all $n\geqslant 1$,
\item  $R/\overline{I^n}$ is a Cohen-Macaulay ring for some $n\geqslant n_1(I)$, where $n_1(I)$ is defined in Theorem \ref{DStab},
\item $I$ is an equimultiple ideal of $R$. 
\end{enumerate}
\end{thm}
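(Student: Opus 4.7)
The plan is to verify the cycle (1) $\Rightarrow$ (2) $\Rightarrow$ (3) $\Rightarrow$ (1), using Lemma \ref{DepthInc} and Theorem \ref{DStab} as the main tools, together with the standard fact that $\dim R/\overline{I^n} = \dim R - \height(I)$ for all $n\geqslant 1$ (since $\overline{I^n}$ and $I$ share the same radical, hence the same minimal primes).

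The implication (1) $\Rightarrow$ (2) is immediate. For (2) $\Rightarrow$ (3), assume $R/\overline{I^n}$ is Cohen-Macaulay for some $n\geqslant n_1(I)$. Then
\[
\dim R - \height(I) \;=\; \dim R/\overline{I^n} \;=\; \depth R/\overline{I^n} \;=\; \dim R - \ell(I),
\]
where the last equality is Theorem \ref{DStab}. Hence $\height(I)=\ell(I)$, i.e.\ $I$ is equimultiple.

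For (3) $\Rightarrow$ (1), suppose $\ell(I)=\height(I)$. By Lemma \ref{DepthInc}(1), for any fixed $m\geqslant 1$ the sequence $\{\depth R/\overline{I^{mn}}\}_{n\geqslant 1}$ is bounded above by $\depth R/\overline{I^m}$. Letting $n\to\infty$ and applying Lemma \ref{DepthInc}(2),
\[
\depth R/\overline{I^m} \;\geqslant\; \lim_{n\to\infty}\depth R/\overline{I^{mn}} \;=\; \dim R - \ell(I) \;=\; \dim R - \height(I) \;=\; \dim R/\overline{I^m}.
\]
The reverse inequality always holds, so $R/\overline{I^m}$ is Cohen-Macaulay for every $m\geqslant 1$.

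I do not anticipate a real obstacle here; everything is already encoded in the previously established results. The only point that needs to be handled carefully is the identification $\dim R/\overline{I^n} = \dim R - \height(I)$, which follows because $\sqrt{\overline{I^n}}=\sqrt{I}$, so the minimal primes (and hence the Krull dimensions) agree. Once this is in place, (2) $\Rightarrow$ (3) is a one-line comparison via Theorem \ref{DStab}, and (3) $\Rightarrow$ (1) is the ``quasi-decreasing along multiples'' squeeze from Lemma \ref{DepthInc}.
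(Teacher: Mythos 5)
Your proposal is correct and follows essentially the same route as the paper: (2)$\Rightarrow$(3) is the same comparison $\dim R/\overline{I^n}=\dim R-\height(I)$ against $\depth R/\overline{I^n}=\dim R-\ell(I)$ from Theorem \ref{DStab}, and (3)$\Rightarrow$(1) is the same squeeze using Lemma \ref{DepthInc}(1) along the subsequence of powers of a multiple together with the limit from Lemma \ref{DepthInc}(2). The only cosmetic difference is that you phrase the squeeze as a limit over the index $n$ with $m$ fixed, whereas the paper fixes the exponent and lets the other factor go to infinity; the content is identical.
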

\begin{proof} If $R/\overline{I^n}$ is a Cohen-Macaulay ring for some $n \ge  n_1(I)$, then by Theorem \ref{DStab} we have
$\dim R/\overline{I^n} = \depth R/\overline {I^n} =\dim R -\ell(I) .$
On the other hand, $\dim R/\overline{I^n} = \dim R/I = \dim R-\height(I)$.  Hence, $\ell(I) = \height(I)$.

Conversely, assume that $\ell(I) =\height(I)$. Then, 
$$\depth R/\overline I \leqslant \dim R/\overline I = \dim R/ I = \dim R-\height(I) = \dim R-\ell(I).$$
For all $n\geqslant 1$, by Lemma \ref{DepthInc} applied to $m\gg 0$, we have
$$\dim R -\ell(I) \geqslant \depth R/\overline I \geqslant \depth R/\overline {I^n} \geqslant \depth R/\overline {I^{mn}}  = \dim R-\ell(I).$$
Hence,
$$\depth R/\overline {I^n} = \dim R-\ell(I) = \dim R -\height(I) = \dim R/I = \dim R/\overline {I^n},$$
which means that $R/\overline{I^n}$ is a Cohen-Macaulay ring.
\end{proof}

In the rest of this section we will improve the above theorem for the class of square-free monomial ideals.  We need some auxiliary results.

\begin{lem}\label{PrimDec}  Let $I$ be an unmixed monomial ideal and $I = Q_1\cap\cdots\cap Q_s$ be an irredundant primary decomposition of $I$. Assume that $\overline{I^n}$ is unmixed for some $n\geqslant 1$. Then
$$\overline{I^n} = \overline{Q_1^n}\cap\cdots\cap \overline{Q_s^n}.$$
\end{lem}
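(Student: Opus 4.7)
The plan is to prove equality via the uniqueness of primary decomposition at isolated components. First, I would observe that since $I$ is unmixed, $\ass(R/I) = \{P_1,\ldots,P_s\}$ where $P_i := \sqrt{Q_i}$, and all $P_i$ have the same height $h$. Because $\sqrt{\overline{I^n}} = \sqrt{I^n} = \sqrt{I}$, the minimal primes of $\overline{I^n}$ are again exactly $P_1,\ldots,P_s$. The hypothesis that $\overline{I^n}$ is unmixed then forces $\ass(R/\overline{I^n}) = \{P_1,\ldots,P_s\}$ with no embedded primes. Consequently, $\overline{I^n}$ admits a (monomial) primary decomposition $\overline{I^n} = \tilde Q_1 \cap \cdots \cap \tilde Q_s$ with each $\tilde Q_i$ being $P_i$-primary, and each $\tilde Q_i$ is uniquely determined as the isolated component $\overline{I^n} R_{P_i}\cap R$.

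Next I would compute this localization. Since integral closure commutes with localization at multiplicative sets, $\overline{I^n} R_{P_i} = \overline{(IR_{P_i})^n}$. Because the $P_j$ are distinct minimal primes of the unmixed ideal $I$ and share the same height, $P_j\not\subseteq P_i$ for $j\ne i$, so $Q_j R_{P_i} = R_{P_i}$ for such $j$, and hence $IR_{P_i} = Q_i R_{P_i}$. Therefore
\[
\tilde Q_i \;=\; \overline{I^n}R_{P_i}\cap R \;=\; \overline{Q_i^n R_{P_i}}\cap R \;=\; \overline{Q_i^n}R_{P_i}\cap R.
\]

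The final and most delicate step is to show that $\overline{Q_i^n}$ is itself $P_i$-primary, from which $\overline{Q_i^n}R_{P_i}\cap R = \overline{Q_i^n}$ and so $\tilde Q_i = \overline{Q_i^n}$, yielding the conclusion. This is the main obstacle, since a monomial ideal with prime radical need not be primary in general (e.g.\ $(x^2,xy)$). To handle it I would exploit that $Q_i$ is both monomial and $P_i$-primary, which forces its minimal generators to involve only the variables indexing $P_i$; therefore $Q_i^n$ has the same property. Passing to the Newton polyhedron via \eqref{EN1}--\eqref{EN2} (or equivalently restricting to the polynomial subring on the variables of $P_i$ and extending by faithful flatness, using that integral closure commutes with polynomial extension), one sees that $\overline{Q_i^n}$ is again generated by monomials in the variables of $P_i$. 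Such a monomial ideal with radical $P_i$ is automatically $P_i$-primary (since any zero-divisor on $R/\overline{Q_i^n}$ must lie in the ideal generated by those variables). Assembling, $\tilde Q_i = \overline{Q_i^n}$ for every $i$, and the primary decomposition of $\overline{I^n}$ reads $\overline{I^n} = \overline{Q_1^n}\cap\cdots\cap\overline{Q_s^n}$, as asserted.
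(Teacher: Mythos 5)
Your argument is correct, and it takes a genuinely different route from the paper's. The paper proves the lemma by a double induction on $r$ and $s$: it groups the $Q_j$ according to which variables are absent from their radicals, expresses $I$ as an intersection of the restrictions $I[i]$, and then uses the identity $\overline{I^n}[F] = \overline{I[F]^n}$ (Remark~\ref{Restriction}) to run the induction. By contrast, you work directly with the uniqueness of isolated primary components: since $\sqrt{\overline{I^n}} = \sqrt{I}$ and both ideals are unmixed, $\ass(R/\overline{I^n}) = \{P_1,\ldots,P_s\}$ consists entirely of minimal primes, so each primary component of $\overline{I^n}$ is canonically $\overline{I^n}R_{P_i}\cap R$; you then compute this via commutation of integral closure with localization, use $IR_{P_i} = Q_iR_{P_i}$, and close the argument by verifying (via the Newton polyhedron / extension from the subring on the $P_i$-variables) that $\overline{Q_i^n}$ is itself $P_i$-primary, hence contracts back to itself. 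Your approach is cleaner conceptually and avoids the combinatorial bookkeeping of the index sets $A_i$; it does, however, lean on the general commutative-algebra facts about isolated components and localization, whereas the paper's proof stays entirely inside the monomial/combinatorial framework and reduces everything to the single restriction identity. Either is a valid proof, but yours gives a crisper explanation of \emph{why} unmixedness of $\overline{I^n}$ is exactly what is needed: it rules out embedded components, so the intersection of the isolated ones recovers $\overline{I^n}$.
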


\begin{proof} We  prove by induction on $s$ and on $r$.  If $s=1$ (which also includes the case $r=1$) there is nothing to prove. 
Assume that $s\geqslant 2$ and $r\geqslant 2$.  Since $I$ is unmixed, $\sqrt{Q_j} \neq \mfr$ for all $j\leqslant s$.   For each $i\leqslant r$, let
$$A_i := \{j \leqslant s \mid x_i \notin \sqrt{Q_j}\}.$$
If there is $j_0\le s$ such that $j_0 \not\in \cup_{i=1}^r A_i$, then $x_1,...,x_r \in \sqrt{Q_{j_0}}$, a contradiction. Hence $ \cup_{i=1}^r A_i = [s]$ and $I = \bigcap_{i=1}^r(\cap_{j\in A_i}Q_j))$, where we set $ \cap_{j\in A_i}Q_j = R$ if $A_i = \emptyset $. Moreover, using Remark \ref{Restriction}(1) and the induction hypothesis on $r$, we may assume that $|A_i| <s$ for all $i\leqslant r$.

It is well-known that one can get a primary decomposition of a monomial ideal $\afr \subset R$ by repeated application of the formula $(B, uv) = (B, u) \cap (B,v)$, where $B$ is a set of monomials and $u,v$ are monomials having no common variable. Based on this fact, it is immediate to see that one can get a primary decomposition of $\afr[i] R$ from that of  $\afr$ by deleting those primary components whose associated prime ideals contain $x_i$ (recall that $\afr[i]$ is obtained from $G(I)$ by setting $x_i = 1$).  

Using this remark  we see that  $I[i]$  is an unmixed ideal for all $i\leqslant r$. Moreover $I[i] = \cap_{j\in A_i} Q_j$  and $I = \cap_{i=1}^r I[i]$. 

By Remark \ref{Restriction}(1)  $\overline{I[i]^nR} = (\overline{I^n})[i] R$ and $\overline{I^n} = \cap_{i=1}^r \overline{I[i]^nR}$. Since $\overline{I^n}$ is unmixed, by the above remark, all  $\overline{I[i]^nR} $ are  unmixed ideals. Since $|A_i| <s$,  by the induction hypothesis on $s$, we also get $\overline{I[i]^nR} = \cap_{j\in A_i} \overline{Q_j^n}$. 

Let $J := \overline{Q_1^n}\cap\cdots\cap \overline{Q_s^n}$.  This is an unmixed ideal. Hence, as shown above,  $J =\bigcap_{i=1}^r J[i]R$ and $J[i]R = \cap_{j\in A_i} \overline{Q_j^n} = \overline{I[i]^nR}$ . Then $J = \cap_{i=1}^r \overline{I[i]^nR} = \overline{I^n}$, as required.
\end{proof}

\begin{lem}\label{Ext} Let $y$ be a new variable and $S = R[y] = k[x_1,...,x_r,y]$. Then, for every $n\geqslant 1$ we have
\begin{enumerate}
\item $\overline{(I,y)^n} = \sum_{i=0}^n y^i \overline{I^{n-i}}S$.
\item $\overline{(I,y)^n}$ is Cohen-Macaulay if and only if $\overline{I^i}$ is Cohen-Macaulay for all $i\leqslant n$.
\end{enumerate}
\end{lem}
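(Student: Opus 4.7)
For part (1), I work at the level of monomials using (\ref{EN1}). Let $J := \sum_{i=0}^n y^i \overline{I^{n-i}}S$. A monomial $\xbf^{\albf}y^c \in S$ lies in $J$ iff there is an $i$ with $0 \le i \le \min(c,n)$ and $\xbf^{\albf} \in \overline{I^{n-i}}$; since $\overline{I^{k}}$ decreases in $k$, this is equivalent to ``$c \ge n$, or $c < n$ and $\xbf^{\albf} \in \overline{I^{n-c}}$''. For the other side, $\xbf^{\albf}y^c \in \overline{(I,y)^n}$ iff $\xbf^{N\albf}y^{Nc} \in (I,y)^{Nn}$ for some $N \ge 1$ by (\ref{EN1}). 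Expanding $(I,y)^{Nn} = \sum_{k=0}^{Nn} y^k I^{Nn-k}$, this reduces to the existence of $k \le Nc$ with $\xbf^{N\albf} \in I^{Nn-k}$. Taking $k$ maximal and invoking (\ref{EN1}) once more gives the same condition, so (1) follows.

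For part (2), decomposing $\overline{(I,y)^n}$ as an $R$-submodule of $S = \bigoplus_{c\ge 0} y^c R$ via (1) (its $c$-th component is $y^c \overline{I^{n-c}}$ for $c < n$, since $\sum_{i\le c}\overline{I^{n-i}}$ reduces to its largest summand, and $y^c R$ for $c \ge n$) produces a graded $R$-module isomorphism
$$
A \;:=\; S/\overline{(I,y)^n} \;\cong\; \bigoplus_{c=0}^{n-1} y^c\bigl(R/\overline{I^{n-c}}\bigr) \;\cong\; \bigoplus_{i=1}^n R/\overline{I^i}.
$$
In particular $y^n\cdot A = 0$. Let $\mathfrak M = (x_1,\ldots,x_r,y)$ be the maximal graded ideal of $S$. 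Since $y^n$ annihilates $A$, we have $\sqrt{\mfr S + (y^n)} = \mathfrak M$, so the local cohomology of $A$ is independent of whether support is taken in $\mathfrak M$ or $\mfr$:
$$
H^p_{\mathfrak M}(A) \;=\; H^p_{\mfr}(A) \;=\; \bigoplus_{i=1}^n H^p_{\mfr}\bigl(R/\overline{I^i}\bigr) \qquad \text{for every } p \ge 0.
$$
Consequently $\depth A = \min_{1 \le i \le n}\depth R/\overline{I^i}$. Since $\dim A = \dim R/I = \dim R/\overline{I^i}$ for every $i \le n$, the ring $A$ is Cohen--Macaulay iff $\depth A = \dim A$ iff every $R/\overline{I^i}$ is Cohen--Macaulay.

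The only mildly delicate point is the depth comparison in part (2): one must observe that, because $y$ is nilpotent on $A$, the depth at $\mathfrak M$ agrees with the depth at $\mfr$, reducing the $S$-module question to a statement about a direct sum of $R$-modules of common dimension, where the Cohen--Macaulay property of the sum is trivially the conjunction of the Cohen--Macaulay properties of the summands.
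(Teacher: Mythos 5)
Your proof is correct and takes essentially the same route as the paper's: part (1) is established via the monomial characterization of integral closure (you invoke (\ref{EN1}) directly at the level of individual monomials, whereas the paper works through the Newton polyhedron description (\ref{EN2}) — these are minor variants of the same idea), and part (2) rests on the identical $R$-module decomposition $S/\overline{(I,y)^n}\cong\bigoplus_{i=1}^{n}R/\overline{I^i}$. The only difference is that you spell out the depth comparison over $\mathfrak M$ versus $\mfr$ via nilpotence of $y$ on $A$, where the paper leaves this as ``the conclusion follows''; your elaboration is accurate.
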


\begin{proof} (1) The inclusion $\sum_{i=0}^n y^i \overline{I^{n-i}}S\subseteq \overline{(I,y)^n}$ follows from the fact that $\overline{I_1}\cdot \overline{I_2} \subseteq \overline{I_1I_2}$ for all ideals $I_1$ and $I_2$ in $S$.

In order to prove the reverse inclusion, let $G(I) = \{\xbf^{\albf_1}, \ldots, \xbf^{\albf_s}\}$.  Set $\albf^*_j = (\albf_j, 0) \in \Nset^{r+1}$ and let $\ebf_{r+1}$ be the $(r+1)$-th unit vector of $\Rset^{r+1}$. Assume that  $(\albf, \beta ) \in NP((I,y)^n) \cap  \Nset^{r+1}$.  From (\ref{EN2}) we see that there are non-negative numbers $a_1, ...,a_s, b $ such that $\sum_{j=1}^ra_j + b \geqslant n$ and 
$(\albf,\beta ) = \sum_{j=1}^sa_j \albf_j^* + b\ebf_{r+1} .$
Then $  b  = \beta \in \Nset $. If $b \geqslant n$, then $\xbf^{\albf}y^\beta  \in y^nS$. Assume that $b<n$. Then $\sum_{j=1}^r a_j \geqslant n-b>0$  and $\albf =  \sum_{j=1}^sa_j \albf_j  \in NP(I^{n-b})$. Hence, by (\ref{EN1}), $\xbf^\albf y^\beta \in y^m \overline{I^{n-m}}S$.  In both cases, $\xbf^\albf y^\beta \in  \sum_{i=0}^n y^i \overline{I^{n-i}}S$, i.e., $\overline{(I,y)^n} \subseteq  \sum_{i=0}^n y^i \overline{I^{n-i}}S$.

(2) From (1) we deduce that
$$S/\overline{(I,y)^n} \cong R/\overline{I} \oplus R/\overline{I^2}\oplus \cdots\oplus R/\overline{I^n}$$
as $R$-modules, and the conclusion follows.
\end{proof}

From now on, let $I$ be an ideal generated by square-free monomials. Such an ideal is often called a Stanley-Reisner ideal and is associated to the simplicial complex $\Delta := \Delta (I)$. In this case we also denote $I$ by $I_\Delta $.  Note that we do not require that $\Delta $ contains all vertices $\{i \}$, $i\leqslant r$.  Recall that for a face $F\in \Delta $, the link of $F$ is defined by 
$$\link_\Delta (F) = \{ G \subseteq [r]\setminus F|\ F\cup G \in \Delta \}.$$
We simply write $\link_\Delta i$ for $\link_\Delta \{i\}$.

\begin{cor}\label{CMlink} If $\overline{I_{\Delta}^n}$ is Cohen-Macaulay, then so is $\overline{I_{\link_\Delta i}^n}$ for every vertex $i$ of $\Delta$.
\end{cor}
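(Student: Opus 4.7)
The plan is to reduce the corollary to the two parts of Remark \ref{Restriction} via the combinatorial identity $I_\Delta[i] = I_{\link_\Delta i}$ (both viewed as ideals in $R[i] = k[x_j \mid j\ne i]$). First I would verify this identity by comparing minimal non-faces. A minimal non-face $F$ of $\Delta$ either avoids $i$, in which case $\mathbf{x}_F$ is unchanged after setting $x_i = 1$, or contains $i$, in which case $\mathbf{x}_F$ maps to $\mathbf{x}_{F\setminus\{i\}}$. Dually, a subset $G \subseteq [r]\setminus\{i\}$ fails to lie in $\link_\Delta i$ precisely when $G \cup \{i\} \notin \Delta$, i.e. when $G$ contains some $F\setminus\{i\}$ with $F$ a minimal non-face of $\Delta$. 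Because $i$ is a vertex of $\Delta$, the singleton $\{i\}$ is itself a face, so it is never a minimal non-face, and no degenerate case arises.

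Next, Remark \ref{Restriction}(1) applied to $I = I_\Delta$ and the singleton $F = \{i\}$ gives
$$\overline{I_\Delta^n}[i] \;=\; \overline{I_\Delta[i]^n} \;=\; \overline{I_{\link_\Delta i}^n},$$
where the second equality uses the identity just established.

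Finally, assume $R/\overline{I_\Delta^n}$ is Cohen--Macaulay. Applying Remark \ref{Restriction}(2) to the monomial ideal $\overline{I_\Delta^n}$ and the subset $\{i\}$ shows that $R[i]/\overline{I_\Delta^n}[i]$ is Cohen--Macaulay. In view of the displayed identity, this ring is exactly $R[i]/\overline{I_{\link_\Delta i}^n}$, which proves the corollary. There is no genuine obstacle here: the only thing to check carefully is the Stanley--Reisner identification $I_\Delta[i] = I_{\link_\Delta i}$, a standard computation, after which the result drops out of the two remarks.
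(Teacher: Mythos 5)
Your proof is correct and uses the same underlying strategy as the paper's---namely, that Cohen--Macaulayness and integral closure both behave well under the operation of setting $x_i=1$---but it implements that strategy more directly. You verify the combinatorial identity $I_\Delta[i]=I_{\link_\Delta i}$ (with the Stanley--Reisner ideal of $\link_\Delta i$ read in the full ring $R[i]=k[x_j\mid j\ne i]$, so that any $j\ne i$ with $j\notin V(\link_\Delta i)$ contributes $x_j$ to the ideal), and then you conclude by Remark~\ref{Restriction}(1) and (2). The paper instead localizes at $x_i$, invokes \cite[Lemma~2.1]{TT} to write $I_\Delta R[x_i^{-1}]=(I_{\link_\Delta i},Y)R[x_i^{-1}]$ where $Y$ collects the variables invisible to the link, deduces $J=(I_{\link_\Delta i},Y)$ in $S=R[i]$, and then strips off the extra variables $Y$ one at a time via Lemma~\ref{Ext}(2). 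Your route bypasses both \cite[Lemma~2.1]{TT} and Lemma~\ref{Ext} at the price of the short direct check of $I_\Delta[i]=I_{\link_\Delta i}$, which you carry out correctly (your remark that $\{i\}$ is never a minimal non-face is exactly the point that prevents degeneracy). The only interpretive footnote is that if one reads $I_{\link_\Delta i}$ in the smaller ring $k[x_j\mid j\in V(\link_\Delta i)]$, as the paper's last step suggests, then one still appeals to Lemma~\ref{Ext}(2) to pass from your conclusion to that one---but your version is the stronger statement, so nothing is lost.
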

\begin{proof} Let $S = k[x_j \mid j\ne i]$. Then  $R = S[x_i]$. Let $J = I_{\Delta}R[x_i^{-1}] \cap S$. We have $\overline{J^n} = \overline{I_{\Delta}^n}R[x_i^{-1}] \cap S$, whence $\overline{J^n}$ is Cohen-Macaulay.

Denote by $V(\Delta )$ the set of vertices of a simplicial complex $\Delta $. Let  $Y = \{x_j\mid j \notin V(\link_{\Delta} i) \text{ and } j \ne i\}$. By \cite[Lemma 2.1]{TT} we have $$I_{\Delta}R[x_i^{-1}] = (I_{\link_{\Delta} i}, Y) R[x_i^{-1}]. $$ 

It follows that $J = (I_{\link_{\Delta} i}, Y) $ as  ideals in $S$. By Lemma \ref{Ext} we conclude that $\overline{I_{\link_{\Delta} i}^n}$ is Cohen-Macaulay.
\end{proof}

Note that $I_{\Delta}$ is a complete intersection if and only if any two of its minimal monomial generators have no common variable. Recall that $\dim \Delta = \max\{|F|| \ F\in \Delta \} -1$. 

\begin{lem}\label{CMDim0} Assume that $\dim \Delta = 0$ and that $\overline{I_{\Delta}^n}$ is Cohen-Macaulay for some $n\geqslant 2$. Then, $I_\Delta$ is a complete intersection. Moreover, $\Delta$ has at most two vertices.
\end{lem}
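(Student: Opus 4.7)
The plan is first to show, by contrapositive, that the hypothesis forces $\Delta$ to have at most two vertices: namely, if $\Delta$ has at least three vertices, then $R/\overline{I_\Delta^n}$ fails to be Cohen-Macaulay for every $n\geqslant 2$. The complete intersection claim then follows as a corollary: when $\dim\Delta=0$ and $|V(\Delta)|\leqslant 2$, every pair of vertices is a non-face, so the minimal non-faces of $\Delta$ are the at-most-one pair $\{v_1,v_2\}\subseteq V(\Delta)$ together with the singletons $\{j\}$ for $j\notin V(\Delta)$, which produces generators of $I_\Delta$ on pairwise disjoint index sets.

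So fix $n\geqslant 2$ and assume $\{1,2,3\}\subseteq V(\Delta)$. Because $\dim R/\overline{I_\Delta^n}=\dim R/I_\Delta\geqslant 1$, it suffices to exhibit $\mfr\in\ass(R/\overline{I_\Delta^n})$ as an embedded prime. My candidate exponent is
$$\gabf:=(n-1)\ebf_1+(n-1)\ebf_2+\ebf_3\in\Nset^r,$$
with zero in every coordinate outside $\{1,2,3\}$. Since $\dim\Delta=0$ and $1,2,3\in V(\Delta)$, the set $G(I_\Delta)$ contains $x_1x_2,x_1x_3,x_2x_3$; the other minimal generators are edge monomials $x_ix_j$ on further pairs of vertices and the variables $x_j$ for non-vertices $j$.

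First I will check $\xbf^{\gabf}\notin\overline{I_\Delta^n}$, i.e.\ $\gabf\notin n\cdot NP(I_\Delta)$. In any decomposition $\gabf=\sum_{g\in G(I_\Delta)}\mu_g\,g+v$ with $\mu_g\geqslant 0$, $\sum_g\mu_g=n$, and $v\in\Rset_+^r$, the fact that $\gabf$ is zero in every coordinate outside $\{1,2,3\}$ forces $\mu_g=0$ for every generator $g$ involving some $x_k$ with $k\notin\{1,2,3\}$. Only $\mu_{12},\mu_{13},\mu_{23}$ (for $\ebf_1+\ebf_2,\ebf_1+\ebf_3,\ebf_2+\ebf_3$) can be positive, with sum $n$; adding the three equations for coordinates $1,2,3$ yields $2n+(v_1+v_2+v_3)=2n-1$, impossible. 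Next I verify $\gabf+\ebf_i\in n\cdot NP(I_\Delta)$ for every $i\in[r]$: for $i\in\{1,2,3\}$ a suitable choice of $(\mu_{12},\mu_{13},\mu_{23})$ summing to $n$ works (e.g.\ $(n-1,1,0)$ when $i=1$ and $(n-2,1,1)$ when $i=3$); for $i\notin\{1,2,3\}$ I take $(\mu_{12},\mu_{13},\mu_{23})=(n-2,0,1)$ together with coefficient $1$ on the extra generator $\ebf_1+\ebf_i$ (if $i\in V(\Delta)$) or $\ebf_i$ (if $\{i\}\notin\Delta$), and balance the remaining weight by a suitable $v\in\Rset_+^r$.

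Combining the two halves, $x_i\xbf^{\gabf}\in\overline{I_\Delta^n}$ for every $i\in[r]$ while $\xbf^{\gabf}\notin\overline{I_\Delta^n}$, so the monomial colon $(\overline{I_\Delta^n}:\xbf^{\gabf})$ contains $\mfr$ and is proper; by maximality of $\mfr$ it equals $\mfr$, giving the embedded associated prime $\mfr\in\ass(R/\overline{I_\Delta^n})$ as required. The only step requiring real care is the case $i\notin\{1,2,3\}$ in the second verification, handled uniformly by exploiting that $\{i\}\notin\Delta$ puts the generator $\ebf_i$ itself at our disposal; the remainder is routine lattice bookkeeping in $\Nset^r$.
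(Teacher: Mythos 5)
Your proof is correct, and the overall strategy (produce a monomial $f$ with $\bigl(\overline{I_\Delta^n}:f\bigr)=\mfr$, exploiting that $\{1,2\},\{1,3\},\{2,3\}$ are all non-faces when $\dim\Delta=0$) is the same as the paper's, but you handle the passage to general $n\geqslant 2$ differently. The paper verifies the claim only at $n=2$, with witness $x_1x_2x_3$, and then appeals to the non-decreasing behaviour of the sets $\ass R/\overline{I^m}$ from \cite[Proposition 16.3]{HIO} to conclude $\mfr\in\ass R/\overline{I_\Delta^n}$ for all $n\geqslant 2$. You instead write down, for each $n\geqslant 2$, the explicit exponent $\gabf=(n-1)\ebf_1+(n-1)\ebf_2+\ebf_3$ (which at $n=2$ is exactly the paper's choice) and check non-membership of $\gabf$ and membership of each $\gabf+\ebf_i$ in $n\,NP(I_\Delta)$ by direct convex-combination bookkeeping, using \eqref{EN1} and \eqref{EN2}. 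That is a little more computation, but it makes the lemma self-contained with respect to the HIO monotonicity input, and the computations are all correct: the parity obstruction $2n-1=2n+(v_1+v_2+v_3)$ shows $\gabf\notin nNP(I_\Delta)$, and your case split for $i\notin\{1,2,3\}$ correctly distinguishes $i\in V(\Delta)$ (use generator $\ebf_1+\ebf_i$) from $i\notin V(\Delta)$ (use generator $\ebf_i$), both of which lie in $E(G(I_\Delta))$ since $\dim\Delta=0$. The closing observation that $|V(\Delta)|\leqslant 2$ forces the minimal non-faces to have pairwise disjoint supports, hence $I_\Delta$ is a complete intersection, is also fine and matches the paper's explicit computation of $I_\Delta=(x_{s+1},\dots,x_r;\,x_ix_j\mid 1\leqslant i<j\leqslant s)$ for $s\leqslant 2$.
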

\begin{proof} Since $\dim \Delta = 0$, we may assume that $\Delta= \left<\{1\},\ldots,\{s\}\right>$ for some $s\leqslant r$. Then 
$$I_{\Delta} = (x_{s+1},...,x_r) +  \cap_{i=1}^s(x_1,\ldots,\widehat{x_i}, \ldots,x_s)=(x_{s+1},...,x_r; x_ix_j\mid 1\leqslant i < j \leqslant s).$$
If $s\leqslant 2$, then $I_\Delta$ is a complete intersection. It remains to show that $s \leqslant 2$.
Assume on the contrary that $s\geqslant 3$. Since
$$\overline{I_{\Delta}^2} = \overline{(x_{s+1},...,x_r, x_ix_j\mid 1\leqslant i < j \leqslant s)^2},$$
we can check that $\mfr = \overline{I^2}\colon (x_1x_2x_3)$,  but $x_1x_2x_3 \not\in I_{\Delta}^2$. Hence $\mfr \in \ass R/\overline{I^2}$. Using the non-decreasing property of the sets associated prime ideals of integral closures of powers of an ideal (see  \cite[Proposition 16.3]{HIO}), we have $\mfr\in \ass R/\overline{I^n}$. Hence  $\overline{I_{\Delta}^n}$ is not Cohen-Macaulay, a contradiction. 
\end{proof}

\begin{lem}\label{CMDim1} Assume that $\dim \Delta = 1$ and that $\overline{I_{\Delta}^n}$ is Cohen-Macaulay for some $n\geqslant 3$. Then, $I_\Delta$ is a complete intersection. 
\end{lem}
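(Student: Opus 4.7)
The plan has three parts: (i) use the link lemma to pin down the degree structure of $\Delta$; (ii) use the quasi-decreasing depth property to force $\Delta$ to be connected, hence a single path or a single cycle; (iii) rule out long paths and long cycles by a Takayama-formula calculation showing $H^{1}_{\mfr}(R/\overline{I_{\Delta}^{n}}) \neq 0$.

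First, for every vertex $i$ of $\Delta$, Corollary~\ref{CMlink} gives that $\overline{I_{\link_{\Delta} i}^{n}}$ is Cohen--Macaulay. Since $\dim\Delta = 1$, $\link_{\Delta} i$ is $0$-dimensional, and Lemma~\ref{CMDim0} (applicable because $n \geq 3 \geq 2$) forces $\link_{\Delta} i$ to have at most two vertices. Hence every vertex of $\Delta$ has at most two neighbours, so the $1$-skeleton of $\Delta$ is a disjoint union of paths and cycles. Moreover, since $I_{\Delta}$ is square-free we have $\overline{I_{\Delta}} = I_{\Delta}$, so Lemma~\ref{DepthInc}(1) with $m=1$ yields
\[
\depth R/I_{\Delta} \;\geq\; \depth R/\overline{I_{\Delta}^{n}} \;=\; \dim R/I_{\Delta},
\]
and $R/I_{\Delta}$ is Cohen--Macaulay. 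By Reisner's criterion the $1$-dimensional complex $\Delta$ is then connected and has no isolated vertex; combined with the previous step, $\Delta$ is either a single path on $r$ vertices or a single cycle of length $r$.

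A direct inspection shows that for such a $\Delta$, $I_{\Delta}$ is a complete intersection exactly when $\Delta$ is a path on at most three vertices, the $3$-cycle, or the $4$-cycle. It therefore suffices to rule out $\Delta$ being a $k$-path with $k \geq 4$ or a $k$-cycle with $k \geq 5$. In each such case I would exhibit $\albf \in \Nset^{r}$ for which the complex $\Delta_{\albf}(\overline{I_{\Delta}^{n}})$ described in Lemma~\ref{FNPn} contains two facets with disjoint vertex-sets, so that $\Delta_{\albf}$ is disconnected and $\widetilde{H}_{0}(\Delta_{\albf};k) \neq 0$. Lemma~\ref{Takay} then gives
\[
\dim_{k} H^{1}_{\mfr}\bigl(R/\overline{I_{\Delta}^{n}}\bigr)_{\albf} \;=\; \dim_{k} \widetilde{H}_{0}\bigl(\Delta_{\albf}(\overline{I_{\Delta}^{n}});k\bigr) \;>\; 0,
\]
forcing $\depth R/\overline{I_{\Delta}^{n}} \leq 1 < 2 = \dim R/I_{\Delta}$ and contradicting the Cohen--Macaulayness of $R/\overline{I_{\Delta}^{n}}$.

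Concretely, for a $k$-path $1{-}2{-}\cdots{-}k$ with $k \geq 4$, I would take $\albf = (n-1)(\ebf_{1} + \ebf_{k})$. The minimal vertex covers of the non-face graph of $\Delta$ are precisely the complements in $[k]$ of the edges $\{i,i+1\}$ of the path, and among the corresponding constraints $\langle \abf_{j},\albf\rangle \geq nb_{j}$ only the two arising from the covers complementary to $\{1,2\}$ and to $\{k-1,k\}$ are violated, yielding the two disjoint facets $\{1,2\}$ and $\{k-1,k\}$ of $\Delta_{\albf}$. For a $k$-cycle with $k \geq 5$ an analogous choice of $\albf$ supported on two non-adjacent vertices works (with a small adjustment for the $5$-cycle, where $\albf = 2\ebf_{1}+\ebf_{3}+\ebf_{4}$ gives $\Delta_{\albf}$ the disjoint facets $\{1,2\}$ and $\{3,4\}$ when $n=3$). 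The main obstacle in writing this out is confirming that no other supporting hyperplane of $NP(I_{\Delta})$ --- notably the non-$0/1$ facets allowed by Lemma~\ref{NPH}, such as the positive-coefficient hyperplane $\sum x_{i} \geq 2$ --- is violated by $\albf$ in a way that would introduce a facet of $\Delta_{\albf}$ reconnecting the two clusters; this is settled by a short case-check using that $\albf$ is concentrated on two far-apart vertices.
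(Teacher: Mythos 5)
Your opening two reductions are correct and close in spirit to the paper's: Corollary~\ref{CMlink} plus Lemma~\ref{CMDim0} gives that every link is $0$-dimensional with at most two vertices, and your use of $\overline{I_\Delta}=I_\Delta$ with Lemma~\ref{DepthInc}(1) to conclude $R/I_\Delta$ is Cohen--Macaulay (hence $\Delta$ connected by Reisner) is a valid alternative to the paper's invocation of \cite[Theorem 2.6]{HTT}. Both get you to: $\Delta$ is a single path or a single cycle. Where the two arguments diverge sharply is after that. The paper observes that the minimal primes $P_{ij}$ are generated by variables, applies Lemma~\ref{PrimDec} to the unmixed ideal $\overline{I_\Delta^n}$ to get $\overline{I_\Delta^n}=\bigcap P_{ij}^n=I_\Delta^{(n)}$, and then cites \cite[Theorem~2.4]{MT1} for symbolic powers. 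You instead try to exhibit an explicit $\albf$ making $\Delta_\albf(\overline{I_\Delta^n})$ disconnected and appeal to Lemmas~\ref{FNPn} and~\ref{Takay}.

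The gap is in the step you flag yourself and then wave away: ``this is settled by a short case-check.'' That case-check is the entire mathematical content of the remaining argument, and it is not short. To run Lemma~\ref{FNPn} you must know, for an \emph{arbitrary} long path or cycle, which facet-defining inequalities $\langle\abf_j,\xbf\rangle\geqslant b_j$ of $NP(I_\Delta)$ are violated by $\albf$, and you must show that the resulting facets $[r]\setminus\supp(\abf_j)$ split into two nonempty clusters with no face joining them. This amounts to a structure theorem for the Newton polyhedron of a path or cycle Stanley--Reisner ideal, which you do not prove; it is essentially the content of the cited \cite[Theorem~2.4]{MT1} that the paper routes through. Moreover, the specific worry you raise --- that a positive-coefficient facet such as $\sum x_i\geqslant 2$ might ``reconnect the clusters'' --- is actually a non-issue: by Lemma~\ref{FNPn}, a violated hyperplane with $\supp(\abf_j)=[r]$ contributes only the face $\emptyset$ to $\Delta_\albf$, which can never reconnect anything. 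The genuine danger you need to rule out is a \emph{proper}-support facet (a hyperplane parallel to some but not all coordinate directions) being violated and contributing a bridging face, and that requires the structural analysis you omit. Finally, a small but symptomatic inaccuracy: for the $5$-cycle with $n=3$ and $\albf=2\ebf_1+\ebf_3+\ebf_4$, the facets of $\Delta_\albf$ are $\{1,2\},\ \{1,5\},\ \{3,4\}$, not just $\{1,2\},\{3,4\}$ (the complex is still disconnected, so the conclusion survives, but it shows the computation is more delicate than the sketch suggests). As written, the proposal is a plausible program that rederives, rather than uses, the key cited input; without carrying out the Newton-polyhedron analysis it leaves the crucial implication unproved.
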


\begin{proof} Since $\overline{I_{\Delta}^n}$ is Cohen-Macaulay and $\sqrt{\overline{I_{\Delta}^n}} = I_{\Delta}$,  $\Delta$ is Cohen-Macaulay by \cite[Theorem 2.6]{HTT}. Since $\dim \Delta = 1$, this property implies that $\Delta$ is connected. In particular, every facet of $\Delta$ has exactly two vertices, and we can regard $\Delta$ as a connected graph without isolated vertices. We may assume that $V(\Delta ) = [s]$ for some $s\leqslant r$.

If $s = 2$, then $\Delta$ is just an edge, and $I_{\Delta} = (x_3,...,x_r)$.  Assume that $s\geqslant 3$. For each vertex $i$ of $\Delta$, by Corollary \ref{CMlink} and Lemma \ref{CMDim0},  $\link_{\Delta}(i)$ is either one vertex or consists of exactly two vertices. Consequently, $\Delta$ is either a path or a cycle. 

For each edge $\{i,j\}$ of $\Delta$ , set $P_{ij} =(x_{s+1},...,x_r; x_l \mid 1\le l \leqslant s; \  l\ne i \text{ and } l \ne j)$. Then,
$I_{\Delta} = \cap_{\{i,j\} \in \Delta} P_{ij}.$
Since $\overline{I_{\Delta}^n}$ is unmixed, by Lemma \ref{PrimDec} we have
$$\overline{I_{\Delta}^n} = \cap_{\{i,j\} \in \Delta} \overline{P_{ij}^n} = \cap_{\{i,j\} \in \Delta} P_{ij}^n = I_{\Delta}^{(n)},$$
where $I_{\Delta}^{(n)}$ is the $n$-th symbolic power of $I_\Delta$. This implies that $I_{\Delta}^{(n)}$ is Cohen-Macaulay. By \cite[Theorem 2.4]{MT1},  every pair of disjoint edges of $\Delta$ is contained in a cycle of length $4$. Since $\Delta$ is either a path or a cycle, we conclude that $\Delta$ is either a path of length  two, or a cycle of length 3 or 4. Hence, either $I_{\Delta} = (x_4,...,x_r; x_1x_3), \   (x_4,...,x_r; x_1x_2x_3)$ or $I_{\Delta} =  (x_5,...,x_r; x_1x_3, x_2x_4)$ - all are complete intersections.
\end{proof}

We can now improve Theorem \ref{CM} for square-free monomial ideals by giving an exact description of all square-free monomial  ideals $I_{\Delta}$ such that  $\overline{I_{\Delta}^n}$ is Cohen-Macaulay for some $n\geqslant 3$.

\begin{thm}\label{CMDelta} Let $\Delta$ be a simplicial complex. Then the following conditions are equivalent:
\begin{enumerate}
\item $\overline{I_{\Delta}^n}$ is Cohen-Macaulay for every $n\geqslant 1$;
\item $\overline{I_{\Delta}^n}$ is Cohen-Macaulay for some $n\geqslant 3$;
\item $I_\Delta$ is a complete intersection;
\item $I_{\Delta}$ is an equimultipe ideal.
\end{enumerate}
\end{thm}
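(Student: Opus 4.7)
The plan is to prove the cyclic chain $(1)\Rightarrow(2)\Rightarrow(3)\Rightarrow(4)\Rightarrow(1)$. Three of the four implications are short: $(1)\Rightarrow(2)$ is immediate by taking $n=3$; for $(3)\Rightarrow(4)$, if $I_\Delta$ is a complete intersection then its minimal monomial generators have pairwise disjoint variable supports and form a regular sequence, so $\ell(I_\Delta)=\height(I_\Delta)$ (both equal to the number of minimal generators), making $I_\Delta$ equimultiple; and $(4)\Rightarrow(1)$ is the equivalence of (1) and (3) in Theorem \ref{CM} applied to $I=I_\Delta$.

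For the main implication $(2)\Rightarrow(3)$, I would induct on $d:=\dim\Delta$. The cases $d\le 1$ are exactly Lemmas \ref{CMDim0} and \ref{CMDim1}. Suppose $d\ge 2$. Cohen-Macaulayness of $R/\overline{I_\Delta^n}$ implies, via \cite[Theorem 2.6]{HTT} (as used in Lemma \ref{CMDim1}), that $R/I_\Delta$ is Cohen-Macaulay, so $\Delta$ is pure of dimension $d$. Hence $\dim\link_\Delta i=d-1\ge 1$ for every vertex $i$ of $\Delta$, and by Corollary \ref{CMlink} the integral closure $\overline{I_{\link_\Delta i}^n}$ is Cohen-Macaulay. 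The induction hypothesis then yields that $I_{\link_\Delta i}$ is a complete intersection for every vertex $i$ of $\Delta$.

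To finish I would deduce that $I_\Delta$ itself is a complete intersection. Suppose two minimal non-faces $F,G$ of $\Delta$ satisfy $F\cap G\ne\emptyset$ and pick $i\in F\cap G$. Then $F\setminus\{i\}$ and $G\setminus\{i\}$ are minimal non-faces of $\link_\Delta i$, and the complete-intersection structure of $I_{\link_\Delta i}$ forces their supports to be disjoint, whence $F\cap G=\{i\}$. Iterating at a second vertex $i'\in F\setminus\{i\}$ (which exists because $|F|\ge 2$), and using the unmixedness identity $\overline{I_\Delta^n}=I_\Delta^{(n)}$ supplied by Lemma \ref{PrimDec}, should force a configuration excluded by the hypothesis $n\ge 3$, in direct parallel with the argument in Lemma \ref{CMDim1} that uses \cite[Theorem 2.4]{MT1} to rule out cycles of length $\ge 5$.

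The main obstacle is this last combinatorial step. Having all vertex-links be complete intersections is strictly weaker in general than $\Delta$ being a complete intersection --- already for $d=1$ the $5$-cycle has vertex-links equal to paths of length two (hence complete intersections) yet $I_{C_5}$ is not a complete intersection --- so the argument must exploit the assumption $n\ge 3$ globally, not merely the Cohen-Macaulayness of $\Delta$, in order to rule out the obstructing configurations and complete the induction.
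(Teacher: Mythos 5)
Your skeleton is exactly the paper's: $(1)\Rightarrow(2)$, $(3)\Rightarrow(4)$, $(4)\Rightarrow(1)$ via Theorem \ref{CM}, and $(2)\Rightarrow(3)$ by induction on $\dim\Delta$ with Lemmas \ref{CMDim0} and \ref{CMDim1} as base cases and Corollary \ref{CMlink} to descend to links. The reduction to ``$\Delta$ connected, $\dim\Delta\geqslant 2$, and $I_{\link_\Delta i}$ is a complete intersection for every vertex $i$'' is also exactly where the paper lands.

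The gap is precisely the step you flag as ``the main obstacle,'' and it \emph{is} a genuine gap in what you wrote: you do not have an argument that turns ``locally a complete intersection'' into ``globally a complete intersection.'' The paper closes it by citing Terai--Yoshida \cite[Theorem 1.5]{TY}, which asserts that a \emph{connected} Stanley--Reisner complex of dimension at least $2$ whose vertex links all have complete-intersection Stanley--Reisner ideals is itself a complete intersection complex. Your $5$-cycle example is the correct reason this implication fails in dimension $1$, but that is exactly why it is quarantined in the base case (Lemma \ref{CMDim1}); it is not an obstruction to the inductive step once $\dim\Delta\geqslant 2$. Consequently your diagnosis that ``the argument must exploit $n\geqslant 3$ globally'' is off: the hypothesis $n\geqslant 3$ is consumed entirely in the base cases $\dim\Delta\leqslant 1$ (via Lemma \ref{PrimDec}, the identification $\overline{I_\Delta^n}=I_\Delta^{(n)}$, and Minh--Trung \cite[Theorem 2.4]{MT1}); in the inductive step the only global input is connectivity, which you already have from Cohen--Macaulayness of $\Delta$. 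Your ad hoc combinatorial sketch (showing $F\cap G=\{i\}$ for intersecting minimal non-faces and then iterating at a second vertex) does not by itself produce a contradiction and would need essentially the content of \cite[Theorem 1.5]{TY} to terminate, so the proposal as written does not prove $(2)\Rightarrow(3)$.
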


\begin{proof} $(1) \Rightarrow  (2)$  and  $(3) \Rightarrow   (4)$ are clear. $(4) \Rightarrow  (1)$ follows from Theorem \ref{CM}.

It remains to prove that $(2) \Rightarrow (3)$. The following proof is similar to that of \cite[Theorem 4.3]{TT}.  We prove the implication by induction on $\dim \Delta$. The case $\dim \Delta \leqslant 1$ follows from Lemmas \ref{CMDim0} and \ref{CMDim1}. Assume that $\dim \Delta \geqslant 2$. Since $\overline{I_{\Delta}^n}$ is Cohen-Macaulay and $\sqrt{\overline{I_{\Delta}^n}} = I_{\Delta}$,  $I_\Delta$ is Cohen-Macaulay by \cite[Theorem 2.6]{HTT}. In particular, $\Delta$ is connected.  On the other hand, 
 by Corollary \ref{CMlink},  $\overline{I_{\link_{\Delta} i}^n}$ is Cohen-Macaulay for all $i\leqslant s$, where w.l.o.g. we assume $V(\Delta ) = [s]$. By the induction hypothesis,  $I_{\link_{\Delta}} i$ is a complete intersection. 
Since $\Delta$ is connected, this implies that $I_\Delta$ is a complete intersection complex by \cite[Theorem 1.5]{TY}. 
\end{proof}

Note that \cite[Theorem 1.2]{TT} states that the last condition in Theorem \ref{CMDelta} is also equivalent to the Cohen-Macaulayness of $R/I^n$ for all $n\geqslant 1$ (or for some fixed $n\geqslant 3$).

The property that the Cohen-Macaulay property of $\overline{I^n}$ for some $n\geqslant 3$   forces that for all $n$ is very specific for  square-free monomial ideals. For an arbitrary monomial ideal, the picture is much more complicate, as shown by the following example.

\begin{exm} Let $d\geqslant 3$ and $I=(x^d,xy^{d-2}z,y^{d-1}z) \subset R=k[x,y,z]$. Then $(x,y,z) \in \ass(R/\overline{I^n})$ if and only if $n\geqslant d$ (see the example in  \cite[Page 54]{Tr1}). Since $\dim R/I = 1$, it follows that:
\begin{enumerate}
\item $R/\overline{I^n}$ is Cohen-Macaulay for each $n=1,\ldots,d-1$;
\item $R/\overline{I^n}$ is not Cohen-Macaulay for any $n\geqslant d$.
\end{enumerate}
Note that $\height(I) = 2$ and $\ell(I) = 3$ in this case.
\end{exm}

\subsection*{Acknowledgment} This work is partially supported by NAFOSTED (Vietnam) under the grant number 101.04-2015.02.


\begin{thebibliography}{99}

\bibitem {BA} C. Bivia-Ausina, {\it The analytic spread of monomial ideals}, Comm. Algebra, {\bf 31} (2003), 3487-3496.

\bibitem {B} M. P. Brodmann, {\it The Asymptotic Nature of the Analytic Spread}, Math. Proc. Cambridge Philos Soc. {\bf 86} (1979), 35-39.


\bibitem {EH} D. Eisenbud and C. Huneke, {\it Cohen-Macaulay Rees Algebras and their Specializations}, J. Algebra {\bf 81} (1983) 202-224.

\bibitem {GH} D. H. Giang and L. T. Hoa, {\it On local cohomology of a tetrahedral curve}, Acta Math. Vietnam., {\bf 35} (2010),  229-241.

\bibitem{HHTT} H. T. Ha, D. H. Nguyen, N. V. Trung and T. N. Trung, {\it  Symbolic powers of sums of ideals}, Preprint ArXiv:1702.01766.
    
 \bibitem {HIO}  M. Herrmann, S. Ikeda, U. Orbanz, Equimultiplicity and Blowing up, Springer-Verlag, 1988.

\bibitem {HH1} J. Herzog and T. Hibi, {\it The Depth of Powers of an Ideal}, J. Algebra {\bf 291} (2005), 534-550.

\bibitem {HQ} J. Herzog and A. A. Qureshi,  {\it Persistence and stability properties of powers of ideals},   J. Pure Appl. Algebra {\bf  219} (2015),  530-542.

\bibitem {HTT}  J. Herzog, Y. Takayama and N. Terai, {\it On the radical of a monomial ideal}, Arch. Math. {\bf 85} (2005), 397-408.

\bibitem {HKTT} L. T. Hoa, K. Kimura, N. Terai and  T. N. Trung, {\it Stability of depths of symbolic powers of Stanley-Reisner ideals}. J. Algebra {\bf 473} (2017), 307-323.

\bibitem {HT} L. T. Hoa and T. N. Trung, {\it Partial Castelnuovo-Mumford regularities of sums and intersections of powers of monomial ideals}, Math. Proc. Cambridge Philos Soc. {\bf 149} (2010), 1-18.

\bibitem {ME} S. McAdam, P. Eakin, {\it The asymptotic Ass}, J. Algebra {\bf 61} (1979), 71-81.

\bibitem  {MT1} N. C. Minh and N. V. Trung, {\it Cohen-Macaulayness of powers of two-dimensional square-free monomial ideals}, J. Algebra {\bf 322} (2009), 4219--4227.

\bibitem{RRV} L. Reid, L. G. Roberts and M. A. Vitulli, {\it Some results on normal homogeneous ideals}, Comm. Algebra
{\bf 31}(2003),  4485--4506. 

\bibitem {S} A. Schrijver, Theory of linear and integer programming, John Wiley $\&$ Sons, 1998.

\bibitem {Ta} Y. Takayama, {\it Combinatorial characterizations of generalized Cohen-Macaulay monomial ideals}, Bull. Math. Soc. Sci. Math. Roumanie (N.S.) {\bf 48} (2005), 327-344.

\bibitem {TT} N. Terai and N. V. Trung, {\it Cohen-Macaulayness of large powers of Stanley-Reisner ideals}, Adv. Math. {\bf 229} (2012), no. 2, 711-730.

\bibitem {TY} N. Terai and  K. Yoshida, {\it  Locally complete intersection Stanley-Reisner ideals}. Illinois J. Math. {\bf 53} (2009), no. 2, 413-429.

\bibitem {Tr1} T. N. Trung, {\it Stability of associated primes of integral closures of monomial ideals}, J. Combin. Ser. A., {\bf 116}(2009), 44-54.

\bibitem {Tr2} T. N. Trung, {\it Stability of depths of powers of edge ideals}, J. Algebra {\bf 452} (2016), 157-187.

\bibitem {Vs} W. Vasconcelos. Integral closure: Rees algebras, multiplicities, algorithms. Springer Monographs in Mathematics. Springer, New York, 2005.


\end{thebibliography}
\end{document}